\newcommand{\g}{\gamma}
\newtheorem{defi}{Definition}[section]
\newtheorem{theorem}[defi]{Theorem}
\newtheorem{lemma}[defi]{Lemma}
\newtheorem{proposition}[defi]{Proposition}
\newtheorem{conjecture}[defi]{Conjecture}
\newtheorem{corollary}[defi]{Corollary}
\newtheorem{problem}[defi]{Problem}
\newtheorem{remark}[defi]{Remark}
\newtheorem{examples}[defi]{Examples}
\newtheorem{question}[defi]{Question}
\def\Aut{\mathop{\rm Aut}\nolimits}
\def\nil2{\mathop{\rm nil2}\nolimits}
\def\Th{\mathop{\rm Th}\nolimits}
\def\exp{\mathop{\rm exp}\nolimits}
\def\sup{\mathop{\rm sup}\nolimits}
\def\h{\mathop{\rm ht}\nolimits}
\def\Hom{\mathop{\rm Hom}\nolimits}
\begin{document}
\author{Dugald Macpherson \and Katrin Tent}
\title{Omega-categorical pseudofinite groups}
\begin{abstract}
We explore the interplay between  $\omega$-categoricity and pseudofiniteness for groups, conjecturing that $\omega$-categorical pseudofinite groups are finite-by-abelian-by-finite. We  show that the conjecture reduces to nilpotent $p$-groups of class~2, and give a proof that several of  the known examples of $\omega$-categorical $p$-groups satisfy the conjecture. In particular, we show by a direct counting argument that for any odd prime $p$ the ($\omega$-categorical) model companion of 
the theory of nilpotent class 2 exponent $p$ groups, constructed by Saracino and Wood, is not pseudofinite, and that an $\omega$-categorical  group constructed by Baudisch with supersimple rank 1 theory  is not pseudofinite. We also survey some scattered literature on $\omega$-categorical groups over 50 years. 
\end{abstract}

\maketitle
\section{Introduction}
This paper explores the interplay between two  model-theoretic finitary conditions on infinite groups: $\omega$-categoricity and pseudofiniteness. We recall that a countably infinite first order structure is {\em $\omega$-categorical} if any countably infinite $N$ with the same first order theory as $M$ is isomorphic to $M$, or equivalently, if $\Aut(M)$ acts {\em oligomorphically} on $M$, that is, has finitely many orbits on $M^n$ for all $n$. We say a structure $M$ is {\em pseudofinite} if it is infinite and every first order sentence true of $M$ has a finite model. The paper is motivated by the following conjecture.

\begin{conjecture} \label{mainq} Every $\omega$-categorical pseudofinite group $G$ is definably finite-by-abelian-by-finite, that is, has a definable normal subgroup $N$ of finite index which has finite derived subgroup.
\end{conjecture}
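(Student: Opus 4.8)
The plan is to prove the conjecture by a chain of reductions that peel off the finite and ``semisimple'' parts of $G$ and isolate a hard nilpotent core. I would rely throughout on a few robust consequences of $\omega$-categoricity. By Ryll--Nardzewski $G$ realises only finitely many $1$-types, and in an $\omega$-categorical structure two elements share a type exactly when they lie in a common $\Aut(G)$-orbit; since automorphisms preserve order, the order is a type-invariant, so only finitely many orders occur. As every finitely generated subgroup is contained in the finite algebraic closure of a finite set, $G$ is locally finite, hence torsion, and therefore of finite exponent $n = \prod_i p_i^{a_i}$. Moreover there are only finitely many $\emptyset$-definable subsets of $G$, which gives two further levers: the commutator width is bounded, so each term of the derived and lower central series is $\emptyset$-definable; and the intersection $G^{0}$ of the (finitely many) $\emptyset$-definable finite-index subgroups is itself a smallest $\emptyset$-definable subgroup of finite index. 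Finally, pseudofiniteness enters via {\L}o\'{s}'s theorem: $G$ is elementarily equivalent to an ultraproduct of finite groups, so any first-order property I verify in $G$ holds in arbitrarily large finite groups, and conversely counting estimates valid in all large finite groups transfer back to $G$.

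First I would eliminate infinite simple sections. If a definable section of $G$ were infinite and simple, then by the classification of simple pseudofinite groups it would be elementarily equivalent to a (twisted) Chevalley group over an infinite pseudofinite field, and hence would interpret that field. Since an infinite field is never $\omega$-categorical, whereas $\omega$-categoricity is inherited by interpretable structures, this is impossible: every definable simple section of $G$ is finite. Combined with the finite exponent and the existence of $G^{0}$, this is the leverage that should force $G$ to be soluble-by-finite, that is, $G^{0}$ soluble after perhaps shrinking by a further finite-index definable subgroup.

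The heart of the reduction is then to descend from soluble-by-finite to nilpotent groups of class $2$. Working inside the definable soluble finite-index subgroup, one studies the $\emptyset$-definable derived and lower central series and uses pseudofiniteness together with the bounded-width principle to control each successive central factor; the finite exponent lets one localise at a single prime, since a locally finite nilpotent group of finite exponent is the direct product of its primary components. Carrying this out should reduce the conjecture, as recorded in the abstract, to the case of an $\omega$-categorical pseudofinite nilpotent $p$-group $G$ of class exactly $2$. For odd $p$ one may further arrange exponent $p$, so that $V := G/Z(G)$ and $G'$ become $\mathbb{F}_p$-vector spaces and the commutator induces an alternating $\mathbb{F}_p$-bilinear map $\beta \colon V \times V \to G'$; oligomorphicity of $\Aut(G)$ then translates into oligomorphicity for this bilinear datum, and the conjecture reduces to showing that pseudofiniteness forces the image $G'$ to be finite.

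The decisive difficulty is this final case: proving that an $\omega$-categorical pseudofinite nilpotent class-$2$ $p$-group has finite derived subgroup. Here the soft structural model theory runs out and one is forced into a genuine counting argument against pseudofiniteness. Suppose $G'$ were infinite; then in any large finite group $H$ elementarily equivalent to a suitable approximation of $G$, the invariant alternating form $\beta$ would be so generic that a typical pair of elements has its commutator spread across a large part of $H'$, and comparing the number of distinct commutator values forced by this genericity against the arithmetic constraints relating $|H|$, $|Z(H)|$ and $|H'|$ should yield a numerical contradiction for $|H|$ large. The model to understand first is the Saracino--Wood generic class-$2$ exponent-$p$ group, whose commutator map is the generic alternating bilinear map, with Baudisch's supersimple rank-$1$ example as a second test case. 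The crux --- and the step I expect to resist a uniform solution --- is to make such an estimate work for \emph{every} $\omega$-categorical alternating $\mathbb{F}_p$-bilinear map with oligomorphic automorphism group, not merely for these explicit families: this is essentially a combinatorial problem about highly symmetric alternating forms over $\mathbb{F}_p$, and I expect no purely model-theoretic shortcut to be available.
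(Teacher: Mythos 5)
You are being asked to compare against ``the paper's own proof'' of this statement, but the statement is Conjecture~\ref{mainq}: it is open, and the paper does not prove it. What the paper proves are partial results in its direction --- Proposition~\ref{nilpot} (nilpotent-by-finite), Proposition~\ref{reduction}/Corollary~\ref{cor_nilp} (reduction to class-$\leq 2$ $p$-groups), and non-pseudofiniteness of specific class-2 examples via Lemma~\ref{bil}. Your proposal, read honestly, is a research plan rather than a proof: you stop exactly where the conjecture is open, saying yourself that the decisive class-2 counting step ``resists a uniform solution'' and offering no argument for it. That final step is not a technical loose end; it \emph{is} the conjecture after the reductions, and the authors even remark (after Corollary~\ref{nil-pseud}) that their counting argument is extravagant and that a group with ``sparse failure of $\sigma_2$'' is a conceivable route to a \emph{counterexample} --- so it is not even clear the statement is true.

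There are also gaps inside the reductions you sketch, where the paper has to do real work. For nilpotent-by-finite, the paper does not go through simple sections at all: it proves that an $\omega$-categorical pseudofinite structure is NSOP (Lemma~\ref{SOP}) and then quotes Macpherson's theorem that an $\omega$-categorical NSOP group is nilpotent-by-finite. Your route --- kill infinite simple sections via the classification of simple pseudofinite groups, conclude soluble-by-finite, then nilpotent-by-finite --- asserts two implications (``no infinite definable simple sections $\Rightarrow$ soluble-by-finite'' and ``soluble-by-finite $\Rightarrow$ nilpotent-by-finite'') without proof, and neither is routine. For the descent to class $2$, the paper needs a genuine group-theoretic input, Lemma~\ref{laurent} (a class-3 $p$-group with $|\g_2G/\g_3G|=p^m$ has $|\g_3G|\leq p^{2m^3}$, hence is finite-by-abelian), fed into an induction on nilpotency class after stripping finite and finite-index characteristic subgroups; your ``carrying this out should reduce the conjecture'' skips this entirely. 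Your further claim that for odd $p$ one may ``arrange exponent $p$'' in the class-2 case is unjustified and not something the paper does: its comprehensive groups have arbitrary finite exponent, and the bilinear-map translation there requires passing to the characteristic definable subgroup $G_0$ of elements whose images have order $p$ and height $0$, not to $G$ itself. Finally, the counting argument you gesture at (Lemma~\ref{bil}) applies only to groups satisfying the strong genericity axioms $\sigma_1\wedge\sigma_2$, i.e.\ to the Saracino--Wood, Baudisch and comprehensive families --- extending it to every $\omega$-categorical alternating form with oligomorphic automorphism group is precisely the unresolved core of the problem.
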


We shall describe such $G$ as `finite-by-abelian-by-finite', and note that this is unambiguous: any  finite-by-(abelian-by-finite) group is clearly (finite-by-abelian)-by-finite, and conversely, if $G$ has a  finite-by-abelian normal subgroup $N$ of finite index, then its derived subgroup $N'$ is finite and characteristic in $N$ so normal in $G$, and $G$ is   finite-by-(abelian-by-finite). Furthermore, as noted in the introduction to \cite{ev-wagner}, we may take $N$ to be characteristic. 
 The upshot is that we may suppose that the normal subgroups of $G$ witnessing `finite-by-abelian-by-finite'  are characteristic and hence (by $\omega$-categoricity) definable without parameters, and from now on we just call such a group $G$ an {\em FAF group}.

We show that Conjecture~\ref{mainq} reduces to the case of $\omega$-categorical pseudofinite   $p$-groups of nilpotency class at most~$2$:

\begin{proposition} \label{reduction}
Conjecture~\ref{mainq} holds if
 every $\omega$-categorical pseudofinite  $p$-group of class at most 2 is FAF. 
\end{proposition}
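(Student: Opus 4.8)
The plan is to take an arbitrary $\omega$-categorical pseudofinite group $G$ and, granting the hypothesis that every $\omega$-categorical pseudofinite $p$-group of class at most $2$ is FAF, to peel $G$ down to such a $p$-group by a chain of reductions. It is cleanest to argue by contraposition: assuming $G$ is \emph{not} FAF, I will produce an $\omega$-categorical pseudofinite nilpotent $p$-group of class at most $2$ that is not FAF, contradicting the hypothesis. Throughout I use three standing facts. First, $\omega$-categoricity forces $G$ to be locally finite of finite exponent (finitely many $\Aut(G)$-orbits means finitely many element orders), and it makes the relevant characteristic subgroups---the centre, the lower central terms $\gamma_i(G)$, the Fitting subgroup, the soluble radical---$0$-definable, since each is a finite union of orbits of bounded commutator width. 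Second, if $N\trianglelefteq G$ is $0$-definable then both $N$ and $G/N$ remain $\omega$-categorical and pseudofinite; for the quotient this holds because $G/N$ is elementarily equivalent to an ultraproduct $\prod_{\mathcal U}(G_i/N_i)$ of finite groups. Third, FAF transfers in both directions between $G$ and a $0$-definable characteristic subgroup of finite index, and between a finite direct product and its factors. These transfers will be used silently.

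\emph{Reduction to the soluble, then nilpotent, then $p$-group case.} Let $R$ be the soluble radical of $G$. Since $G$ has finite exponent $n$, the classification of finite simple groups shows there are only finitely many finite simple groups of exponent dividing $n$ (alternating and Lie-type groups contain elements of unboundedly large order), so the semisimple part of $G$ is assembled from finitely many isomorphism types; playing this against pseudofiniteness one must rule out an infinite semisimple section, whence $R$ is $0$-definable of finite index and, not being FAF, may replace $G$. Now inside the soluble group let $F=F(G)$ be the Fitting subgroup. In finite soluble groups $C_G(F)\le F$, a first-order property that passes to $G$, so $G/F$ embeds in $\Aut(F)$ and any obstruction to FAF must already appear in a $0$-definable nilpotent section, which I extract and which is again $\omega$-categorical, pseudofinite and non-FAF. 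A nilpotent group of finite exponent is the direct product of its finitely many $0$-definable Sylow $p$-subgroups, and FAF passes between such a product and its factors, so some Sylow factor is a non-FAF $\omega$-categorical pseudofinite $p$-group; replace $G$ by it.

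\emph{Reduction to class $2$.} Let $G$ now be a non-FAF $\omega$-categorical pseudofinite nilpotent $p$-group of class $c$, and induct on $c$, the case $c\le 2$ being exactly the desired witness against the hypothesis. For $c\ge 3$ I exploit the $2$-step factors of the lower central series: the sections $\gamma_i(G)/\gamma_{i+2}(G)$, and in particular $G/\gamma_3(G)$, are $\omega$-categorical, pseudofinite and of class at most $2$, hence FAF. The task is to convert the resulting finiteness (up to finite index) of each $\bigl(\gamma_i(G)/\gamma_{i+2}(G)\bigr)'$ into control of the successive factors $\gamma_{i+1}(G)/\gamma_{i+2}(G)$, so that either $G$ is itself forced to be FAF or some section of strictly smaller class fails to be, closing the induction.

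\emph{The main obstacle.} The exponent and definability bookkeeping, the preservation of $\omega$-categoricity and pseudofiniteness under $0$-definable subgroups and quotients, and the primary decomposition are all routine. The two genuinely delicate points are (i) excluding an infinite semisimple section, where pseudofiniteness must be weighed against the finite list of bounded-exponent simple groups, and, above all, (ii) the passage from soluble to nilpotent together with the class reduction. The real danger here is that failure of FAF is a \emph{global} feature not localised in any single class-$2$ section. The instrument for defeating this is precisely the uniformity supplied by $\omega$-categoricity---bounded commutator width and finitely many orbits on each $G^{k}$---which should let one glue the local bounds obtained from the FAF class-$2$ sections into one finite-index subgroup with finite derived subgroup, or else pin down a counterexample of strictly smaller class.
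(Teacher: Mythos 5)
Your overall strategy---reduce to nilpotent, then to a $p$-group, then induct on nilpotency class---matches the paper's, but the proposal leaves both of the genuinely hard steps unproved, and in each case the missing ingredient is a specific result that your sketch does not supply.

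First, the reduction to the nilpotent-by-finite case. You propose to pass through the soluble radical (invoking CFSG to bound semisimple sections) and then the Fitting subgroup, but the step ``any obstruction to FAF must already appear in a $0$-definable nilpotent section, which I extract'' is not an argument: even granting $C_G(F)\le F$, it is not clear that $F$ has finite index, nor that non-FAF-ness of $G$ localises to a nilpotent section. The paper avoids all of this by first proving the easy Lemma~\ref{SOP} (an $\omega$-categorical pseudofinite structure is NSOP) and then quoting \cite[Theorem 1.2]{mac1}, which says that an $\omega$-categorical NSOP group has a $0$-definable nilpotent normal subgroup of finite index. Without that citation (or a full reproof of it), your soluble/Fitting reduction is a gap, not a routine step.

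Second, and more seriously, the reduction from class $c\ge 3$ to class $2$ is exactly the point where you stop: you describe ``the task'' of converting FAF-ness of the class-$2$ sections $\gamma_i(G)/\gamma_{i+2}(G)$ into FAF-ness of $G$ and say the uniformity from $\omega$-categoricity ``should let one glue the local bounds,'' but no gluing argument is given, and it is not at all clear that one exists in the form you describe. The paper closes the induction with a concrete piece of finite group theory (Lemma~\ref{laurent}, due to Bartholdi): if $G$ is a $p$-group of class $3$ with $|\gamma_2G/\gamma_3G|=p^m$, then $|\gamma_3G|\le p^{2m^3}$, so $G$ is finite-by-abelian. The induction is then set up by first arranging (using that an $\omega$-categorical group has only finitely many characteristic subgroups) that $G$ has no proper characteristic subgroup of finite index and no non-trivial finite characteristic subgroup; applying the inductive hypothesis to $G/\gamma_kG$, where $\gamma_kG$ is the last non-trivial term of the lower central series, forces $G$ to have class exactly $3$ with $\gamma_2G/\gamma_3G$ finite, and Lemma~\ref{laurent} then yields a non-trivial finite characteristic subgroup, a contradiction. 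Your proposal contains no analogue of this lemma, and without it the inductive step does not close.
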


It is easy to give examples of $\omega$-categorical pseudofinite  groups which are abelian (e.g. elementary abelian $p$-groups), and which are  finite-by-abelian  but not abelian-by-finite (see also the examples in~\ref{examples}). For the latter, the extraspecial $p$-groups of exponent $p$ ($p$ odd) provide examples: they are $\omega$-categorical by \cite{felgner}, and pseudofinite since they are smoothly approximated (in the sense of \cite{klm}) by finite extraspecial $p$-groups, as noted in \cite[`Added in proof']{klm}. See \cite[Appendix A]{milliet} for further discussion of the model theory of  extraspecial groups. The construction of extraspecial groups is generalised in \cite[Theorem A]{apps1}, where it is shown that if $G$ is a finite nilpotent class~2 group and $K$ is a subgroup with $G'\leq K\leq Z(G)$, then the central product of $\aleph_0$ copies of $G$, amalgamated over $K$, is $\omega$-categorical. 

We give a short proof in Section 2 of the following result, an easy consequence of known results on $\omega$-categorical groups.

\begin{proposition} \label{nilpot}
Every $\omega$-categorical pseudofinite group is nilpotent-by-finite.
\end{proposition}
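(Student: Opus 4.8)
The plan is to combine two structural features of $G$: the finite exponent forced by $\omega$-categoricity, and the severe restriction on simple sections forced by pseudofiniteness. First I would observe that $G$ has finite exponent: since $G$ is $\omega$-categorical, $\Aut(G)$ has finitely many orbits on $G$, any two elements in the same orbit have equal order, so only finitely many element orders occur and their least common multiple is an exponent $e$ for $G$. Being pseudofinite of exponent dividing $e$, $G$ is moreover locally finite: by the positive solution of the restricted Burnside problem there is a bound $B(m,e)$ on the order of an $m$-generated finite group of exponent dividing $e$, and for each fixed $m$ the assertion that any $m$ elements generate a subgroup of size at most $B(m,e)$ is expressible by a first-order sentence (a subgroup of bounded order is exactly the set of words of bounded length in the generators) true in all finite groups of exponent dividing $e$, hence true in $G$.

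Next I would pin down the composition factors. By the classification of the finite simple groups, the exponent of $\mathrm{Alt}(n)$ grows with $n$ (it is divisible by every odd prime at most $n$), and the exponent of a simple group of Lie type grows with its defining parameters (being divisible by quantities such as $q-1$ and by $p$-powers increasing with the rank); hence only finitely many finite nonabelian simple groups have exponent dividing $e$. Together with the classification of the infinite simple locally finite (equivalently, pseudofinite) groups---all of which have infinite exponent, being of Lie type over an infinite field or finitary---this shows that $G$ has no infinite simple section and that the nonabelian composition factors of its finite subgroups lie in a fixed finite list. The heart of the argument is then to upgrade this to solubility-by-finiteness: one uses the known structure theory of $\omega$-categorical groups (a characteristic series with characteristically simple or abelian factors) to see that a fixed nonabelian simple group cannot occur in unboundedly many parallel chief factors without destroying $\omega$-categoricity. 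Concretely, the restricted direct power $\bigoplus_{\aleph_0}\mathrm{Alt}(5)$ is pseudofinite but \emph{not} $\omega$-categorical, since its automorphism group only permutes and acts on the direct factors, leaving infinitely many orbits on elements. This step should yield a definable soluble normal subgroup of finite index.

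Finally, a soluble $\omega$-categorical group of finite exponent is nilpotent-by-finite by the known structure theory for $\omega$-categorical soluble groups, which delivers the conclusion. I expect the main obstacle to be precisely the middle step, since neither hypothesis alone suffices: $\bigoplus_{\aleph_0}\mathrm{Alt}(5)$ is pseudofinite but not nilpotent-by-finite, while there exist $\omega$-categorical groups that are not soluble-by-finite yet fail to be pseudofinite. The argument must therefore genuinely interleave the pseudofinite classification of simple sections with the orbit-counting constraints of $\omega$-categoricity in order to rule out nonabelian chief factors up to finite index.
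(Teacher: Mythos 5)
Your opening reductions are fine (bounded exponent follows from finitely many $\Aut(G)$-orbits on elements, and in fact $\omega$-categoricity alone already gives uniform local finiteness, since $\langle g_1,\dots,g_m\rangle$ lies in the definable closure of $\{g_1,\dots,g_m\}$, which is finite). But the proof has two genuine gaps. First, the step you yourself flag as the ``heart of the argument'' --- showing that non-abelian chief factors occur only boundedly often, so that $G$ is soluble-by-finite --- is never carried out; it is only motivated by the example $\bigoplus_{\aleph_0}\mathrm{Alt}(5)$, and that example is misdescribed: the restricted direct power of a finite non-abelian group is \emph{not} pseudofinite (Section 2 of the paper proves this via the preorder $x<y\Leftrightarrow C_H(x)>C_H(y)$, which has no maximal non-central element in $H$ but must have one in any finite non-abelian group). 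Second, and more seriously, your final step is false as stated: a soluble $\omega$-categorical group of finite exponent need not be nilpotent-by-finite. The Boolean power $B[S_3]$ of the symmetric group $S_3$ over the countable atomless Boolean algebra is $\omega$-categorical, metabelian and of exponent $6$, yet any nilpotent subgroup of finite index $k$ would have nilpotent image of index at most $k$ under each surjection $B[S_3]\to S_3^{\,n}$ (evaluation on a clopen partition), while a nilpotent subgroup of $S_3^{\,n}$ projects into proper subgroups of each factor and so has index at least $2^n$. Thus pseudofiniteness must re-enter at this last stage; solubility plus $\omega$-categoricity plus finite exponent do not suffice.

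The paper's proof bypasses all of this: Lemma~\ref{SOP} shows that an $\omega$-categorical pseudofinite structure does not have the strict order property, and \cite[Theorem 1.2]{mac1} states that an $\omega$-categorical group whose theory lacks the strict order property has a $0$-definable nilpotent normal subgroup of finite index. That one citation subsumes both your elimination of non-abelian composition factors and the passage from soluble to nilpotent; the only new input is the short Ramsey argument of Lemma~\ref{SOP}. If you wish to pursue your route, the missing middle and final steps would essentially amount to reproving the theorem of \cite{mac1} under the stronger hypothesis of pseudofiniteness, and you would still need a mechanism --- in practice the strict order property via chains of centralisers --- to exclude Boolean-power-like soluble configurations such as $B[S_3]$.
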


We then show that Conjecture~\ref{mainq} holds for many  of the $\omega$-categorical class~2 groups that we know of, in particular (for odd primes $p$) for the model companion of the theory of exponent $p$ class~2 $p$-groups described by Saracino and Wood in  \cite{sar-wood}, and various generalisations; namely  the {\em comprehensive} groups of Apps \cite{apps1} (see Theorems~\ref{comp} and ~\ref{comp_not_psf}) and higher nilpotency class generalisations of the Saracino-Wood groups, considered in \cite{maier},  \cite{baudisch2}  \cite{baudisch-neo}, and \cite{DMRS}. In particular, we prove the following result (Corollary~\ref{nil-pseud} below).

\begin{theorem} \label{sarwood}
\begin{enumerate}
\item[(i)]
For $p$ an odd prime, the unique countable existentially closed nilpotent class~2 group of exponent $p$ is not pseudofinite.
\item[(ii)] The $\omega$-categorical groups $D(n)$ with supersimple rank 1 theory considered by Baudisch in \cite{baudisch-neo} are not pseudofinite for $n\geq 2$.
\end{enumerate}
\end{theorem}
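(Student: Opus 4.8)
The plan is to prove non-pseudofiniteness in the cleanest possible way: since a group is pseudofinite exactly when every sentence in its theory has a finite model, it suffices in each case to exhibit a single first-order sentence that is true in the group but false in all finite groups. Both groups are nilpotent of class $2$ and exponent $p$ (with $p$ odd), so I would first fix the standard coordinates. Writing $V=G/Z(G)$ and $W=G'$, both elementary abelian and viewed as $\mathbb{F}_p$-vector spaces, the commutator induces an alternating bilinear map $\beta\colon V\times V\to W$, and every group-theoretic statement can be read off from $\beta$. For a linear functional $\mu\in \mathrm{Hom}(W,\mathbb{F}_p)$, set $b_\mu(u,v)=\mu(\beta(u,v))$, so that $\{b_\mu\}$ is a linear \emph{pencil} of alternating forms on $V$ of dimension $\dim W$ (using that $\beta$ is onto $W$). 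This pencil is the object on which the contradiction will be run.

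For part (i), I would extract the relevant richness of the existentially closed model $G$ by the usual one-point extension argument: any model can be enlarged by adjoining a generator whose commutators with a finite linearly independent set are prescribed arbitrarily, and such enlargements preserve class $2$ and exponent $p$ (here $p$ odd is used). This yields $Z(G)=G'$ and, crucially, the \emph{joint surjectivity} statement $\chi_2$: for all $x_1,x_2$ linearly independent modulo $Z(G)$ and all central $c_1,c_2$, there is $y$ with $[x_1,y]=c_1$ and $[x_2,y]=c_2$. Both $\chi_2$ and ``$\dim G'\ge 2$'' (expressed as the existence of two $\mathbb{F}_p$-independent commutators) are first-order and true in $G$. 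The heart of the matter is to show their conjunction has \emph{no} finite model. In a finite $H\models\chi_2$, surjectivity of $y\mapsto([x_1,y],[x_2,y])$ onto $Z(H)^2$ first forces $H'=Z(H)$; the single-variable specialization then forces every nonzero $b_\mu$ to be non-degenerate, so each $B_\mu$ (the matrix of $b_\mu$) is invertible; finally the genuinely two-variable instances forbid $B_{\mu_1}u+B_{\mu_2}v=0$ for independent $u,v$, which means $-B_{\mu_1}^{-1}B_{\mu_2}$ fixes every line and is therefore scalar. Hence all $b_\mu$ are mutually proportional, the pencil is one-dimensional, and $\dim Z(H)\le 1$, contradicting $\dim G'\ge 2$. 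Equivalently, in counting form, $\chi_2$ makes $|H:C_H(x_i)|=|Z(H)|$ while $C_H(x_1)C_H(x_2)=H$ for every independent pair, which is impossible once $|Z(H)|\ge p^2$.

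For part (ii), I would run the same template on Baudisch's groups $D(n)$, whose $\omega$-categoricity and supersimple rank-$1$ theory arise from a Hrushovski-style generic amalgamation governed by a predimension. The plan is to read off from the genericity axioms the analogue of $\chi_2$ — the existence of generic (predimension-respecting) one-point extensions realizing prescribed commutators in two independent directions — as a first-order sentence true in $D(n)$, together with ``$\dim G'\ge 2$'', which should hold precisely because $n\ge 2$ governs the size of the derived group (for $n=1$ the centre is too small, the group resembles an extraspecial group, and is pseudofinite). One then derives the same pencil-collapse contradiction in any finite model.

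The main obstacle, in both parts, is the finite-impossibility step rather than the extraction of the sentences: one must be sure the genericity really delivers \emph{joint} surjectivity for two independent directions, since single-direction surjectivity alone is satisfied by the $\mathbb{F}_{p^{\dim W}}$-Heisenberg groups and is therefore consistent with finiteness, and one must check that the pencil collapse is unconditional. For (ii) the extra subtlety is that the predimension forces extensions to be self-sufficient, so I expect the delicate point to be verifying that the strongest genericity statement compatible with the predimension still yields $\chi_2$-type joint surjectivity on the relevant finite configurations when $n\ge 2$, and that $\dim G'\ge 2$ is guaranteed there; confirming these two points is where the real work lies.
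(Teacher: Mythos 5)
Your global strategy coincides with the paper's: extract the first-order package ``joint surjectivity of $y\mapsto([x_1,y],[x_2,y])$ onto $Z^2$ for all $x_1,x_2$ independent mod $Z$'' (the Saracino--Wood axiom $\sigma_2$) together with ``$Z=G'$ contains two independent elements'', pass to the induced bilinear map $\beta:G/Z\times G/Z\to Z$, and show this configuration has no finite model; for (ii) one then checks that $D(n)$ satisfies $\sigma_1\wedge\sigma_2$. Where you genuinely differ is in the finite-impossibility step (the paper's Lemma~\ref{bil}). The paper argues by counting: each kernel $K_v$ of $\beta(v,\cdot)$ has codimension $d=\dim W\geq 2$, any two such kernels span $V'$, so after dualizing one obtains $\frac{p^n-1}{p-1}$ subspaces of $(V')^*$ of dimension $d$ meeting pairwise in $\{0\}$, which overcounts vectors. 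You instead run a pencil-of-forms argument: every nonzero $b_\mu=\mu\circ\beta$ is nondegenerate, joint surjectivity forces $-B_{\mu_1}^{-1}B_{\mu_2}$ to preserve every line of $V$ and hence be scalar, so $b_{\mu_2+\lambda\mu_1}=0$ for some $\lambda$, contradicting nondegeneracy once $\dim W\geq 2$. This is correct and is a clean alternative to the paper's counting (which the authors themselves describe as ``extravagant''); your observation that single-direction surjectivity alone is consistent with finiteness, being realised by the symplectic groups $H(p,d)$, matches the paper's Examples~\ref{examples}. Two caveats on part (ii): Baudisch's $D(n)$ is not a Hrushovski-style predimension construction but a plain Fra\"iss\'e limit (free amalgamation) of finite nil-2 exponent-$p$ groups with distinguished independent central constants $c_1,\ldots,c_n$ satisfying $G'\leq\langle c_1,\ldots,c_n\rangle\leq Z(G)$ --- there is no self-sufficiency to worry about, which is why these groups are supersimple of rank 1; and the verification of $\sigma_1\wedge\sigma_2$ that you defer as ``the real work'' is exactly what the paper supplies (briefly) by working in the category $\mathbb{B}^P$ of alternating bilinear maps, where the required one-point extensions are immediate from free amalgamation, with $W=\langle c_1,\ldots,c_n\rangle$ of rank $n\geq 2$.
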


We remark that Baudisch \cite{baudisch-neo} showed that the group in (i) has TP${}_2$ theory. More recently, d'Elb\'ee, M\"uller, Ramsey and Siniora \cite[Corollary 3.7]{DMRS} have shown it to have an NSOP${}_1$ theory. Part (ii) above shows that not every $\omega$-categorical finite-by-abelian group is pseudofinite, i.e. the converse to our conjecture is false: the groups $D(n)$ for $n$ finite are all finite-by-abelian.

A theory has the {\em strict order property} if it has a model $M$ such that there is a definable preorder on some power $M^n$ with an infinite totally ordered subset, and a model has the strict order property if its theory has. We say the model (or theory) is {\em NSOP} otherwise. 
It is  easy to see that an $\omega$-categorical  structure with the strict order property is not pseudofinite (see Lemma~\ref{SOP}).  The obvious ways to show that a structure is not pseudofinite are to define a partial ordering which is dense or has no greatest element, or to define a map $f:X \to X$ which is injective but not surjective, or vice-versa. Such a function cannot be definable in an $\omega$-categorical structure, since it would lead to pairs from $X$ at arbitrary `distance' with respect to $f$. We believe Theorem~\ref{sarwood} may be the first result proving non-pseudofiniteness of an $\omega$-categorical structure without the strict order property, and that (ii) may be the first example of an $\omega$-categorical supersimple non-pseudofinite structure. (In the other direction, Kruckman in \cite[Theorem 4.5]{kruckman} shows that the theory $T_{{\rm feq}}^{*}$ of parametrised equivalence relations is $\omega$-categorical  pseudofinite but not supersimple.)
Of course, pseudofiniteness is unknown for many $\omega$-categorical structures -- for example it is a well-known open problem posed by Cherlin for the universal homogeneous triangle-free graph (see \cite[Problem A']{cherlin1}, or \cite{cherlin2}). Pseudofiniteness seems also to be open for the arity 3 analogue of the  homogeneous triangle-free graph -- namely the universal homogeneous tetrahedron-free 3-hypergraph, which  (unlike the non-simple homogeneous triangle-free graph) has supersimple rank 1 theory. 

\medskip

{\bf Some background on $\omega$-categorical groups.}
  The structure of $\omega$-categorical groups has been revisited since the early 1970s by many authors, and some substantial literature appears to be little known. We summarise some aspects here, focussing on themes related to Conjecture~\ref{mainq}.

First,  there are several results  recovering our intended conclusion, {\em  i.e.}  showing that $\omega$-categorical groups satisfying certain model-theoretic hypotheses are FAF.  For example, by \cite{bcm} every $\omega$-categorical superstable group is abelian-by-finite (but under stability only virtual nilpotency is known). This is generalised in \cite{ev-wagner}, where it is shown that every $\omega$-categorical  group with supersimple theory  is FAF. The latter also generalises Proposition 6.2.4 from \cite{cher-hrush} where the same conclusion is obtained for groups interpretable in a smoothly approximable structure; such structures are known to be supersimple. (In \cite[Proposition 6.2.4]{cher-hrush}, there are assumptions of type amalgamation and modularity -- these are shown to hold for smoothly approximable structures in Proposition 5.1.15 and Corollary 5.6.4 respectively -- note that smooth approximation is equivalent to Lie coordinatisability by \cite[Theorem 2]{cher-hrush}.) Finally, by \cite{dob-wag}, every $\omega$-categorical group of finite {\em burden} is FAF. In each case, there is some notion of rank, and the arguments often proceed by identifying minimal abelian groups and working with definable isogenies. Some of these results have ring-theoretic analogues, and as noted at the end of this paper, Conjecture~\ref{mainq} has a ring-theoretic equivalent, namely  Conjecture~\ref{qrings}.

An important construction technique for $\omega$-categorical algebraic objects is that of {\em boolean powers}. For groups, one takes an $\omega$-categorical or finite group $G$ equipped with the discrete topology, the Stone space $S$ of a countable $\omega$-categorical Boolean algebra $B$ (for example the countable atomless Boolean algebra, in which case $S$ is a Cantor space), and forms the group $B[G]$ consisting of all continuous maps $S\to G$, with coordinatewise multiplication as the group operation. There is an analogous construction (see Section 2 of \cite{apps2}, or \cite[Section 2]{wilson}) when $B$ is an $\omega$-categorical Boolean ring without a 1: in the case above corresponding to the Cantor set, one obtains the group $B^-(G)$ consisting of all $f:S \to G$ with $f(x_0)=1_G$, where $x_0\in S$ is specified.  It is easily seen that if $G$ is an $\omega$-categorical group which is nilpotent of class~2 but not abelian, and $B$ is infinite, then $B[G]$ and $B^-[G]$ are $\omega$-categorical nilpotent class~2 and not FAF (see  for example  the paragraph before Proposition 4.1 in \cite{apps1}).  By the  easy Lemma~\ref{boolpseud} below, such groups are not pseudofinite. For more on Boolean powers of groups, see also \cite{apps0}. In particular, since any $\omega$-categorical group has a maximally refined chain of characteristic subgroups with characteristically simple and $\omega$-categorical or finite successive quotients, \cite[Theorem A]{apps2} gives an approach to structural questions about $\omega$-categorical groups.

A number of authors have revisited the subject of $\omega$-categorical nilpotent groups, from the viewpoints of Fra\"iss\'e limits, existentially closed groups in certain classes,  and model companions. We summarise briefly what is known.

Saracino and Wood \cite[Theorem 3.9]{sar-wood}, among other results, show that for every finite $m\geq 2$ the collection of nilpotent class~2 groups of exponent $m$ has a model companion, and this model companion is $\omega$-categorical. The proof is essentially by identifying appropriate axioms (of the form generalised  by Apps, see below). A related thread of work explores groups whose theory has quantifier-elimination. Work of Cherlin and Felgner \cite{cher-felg} reduces the classification to that of QE nilpotent class~2 groups of exponent 4. In \cite{sar-wood2}, constructions are given of $2^{\aleph_0}$ non-isomorphic countable $\omega$-categorical such groups. The construction is analogous to that of Henson \cite{henson} of homogeneous digraphs: the authors find a family $\mathcal{F}$ of mutually non-embeddable finite  class~2 groups of exponent  4, for each $S\subset \mathcal{F}$ consider the class of finite  class~2 groups of exponent at most 4  not embedding a member of $S$, and show this class has the amalgamation property;  distinct sets $S$ yield non-isomorphic Fra\"iss\'e limits. This approach is developed further in \cite{cher-sar-wood} with a view also to constructions of homogeneous rings. 

Apps \cite{apps1}, generalising the Saracino-Wood construction of model companions for the theory of class~2 $p$-groups with a given bound on the exponent (not necessarily prime), gives the construction of {\em comprehensive groups} described in Section 2 below. The key point is that the centre $Z(G)$ is specified in advance, together with the exponents of $Z(G)$, $G/Z(G)$, and $G$. For given data satisfying certain conditions, the model companion of such a class is described in \cite[Theorem D]{apps1}. 

Maier in the late 1980s wrote a series of papers on constructions of existentially closed nilpotent groups in various classes. The proofs are based on an intricate adaptation, using Lazard series,   of Higman's amalgamation results for nilpotent groups, where the groups are equipped with a common central series. In particular, in \cite[Theorem 3.5]{maier}, Maier shows that for $p$ an odd prime and  $c<p$ the class of groups of nilpotency class $c$ and exponent $p$ has a unique existentially closed member, its model companion, which is $\omega$-categorical. The main focus of Maier's paper is elsewhere, and he notes that in the case $c=2$, Saracino and Wood had already constructed this model companion in \cite[Theorem 3.9]{sar-wood}.

Baudisch \cite{baudisch2} shows that for an odd prime $p$ the class of finite groups of exponent $p$ and class at most 2, equipped with a predicate for a central subgroup, has the amalgamation property. Its Fra\"iss\'e limit will as a group be isomorphic to the model companion of the collection of class~2 exponent $p$ groups. It follows that the latter group has quantifier-elimination once a predicate is added for its centre, and Baudisch also shows that this group does not have simple theory, since there is an infinite descending chain of centralisers of finite sets, each with infinite index in its predecessor. In more recent work Baudisch also shows in \cite{baudisch-neo} that its theory is TP${}_2$. He explores in \cite{baudisch3, baudisch4}  generalisations of the construction in \cite{baudisch2} to arbitrary nilpotency class $c<p$.
  The approach to amalgamation is analogous to that of Maier, and there is a precursor (for $c=2$) in \cite{saracino}.

Very recently, d'Elb\'ee, M\"uller, Ramsey and Siniora \cite{DMRS} have revisited the results of Baudisch and Maier. They give a careful description of the amalgamation in the context of Lie algebras $L$ equipped with a `Lazard series', that is, a series $L=L_0\geq L_1 \geq \ldots \geq L_{c+1}=0$, with $[L_i,L_j]\leq L_{i+j}$ for all $i,j$ (where $L_k=0$ for all $k>c$). They
 transfer the amalgamation from such `Lazard Lie algebras' to  the corresponding class of finite groups (exponent $p$, class at most $c$, where $c<p$, equipped with a Lazard series). This yields a Fra\"iss\'e limit $\mathbf{G}_{c,p}$, which is $\omega$-categorical, and exactly the model companion of the class of exponent $p$ groups of class at most $c$ constructed by Maier in \cite[Theorem 3.5]{maier}. 
They show that the groups $\mathbf{G}_{2,p}$ are NSOP${}_1$ -- it was already known from Baudisch's work that they do not have supersimple theory. Strikingly, they also show that for $c\geq 3$ the group $G_{c,p}$ is SOP${}_3$ and NSOP${}_4$, and is $c$-dependent and $(c-1)$-independent.

\medskip

{\bf Structure of the paper.} Propositions~\ref{reduction} and ~\ref{nilpot}, along with Theorem~\ref{sarwood}, are proved in Section 2. We also show in Theorem~\ref{comp_not_psf} that Apps's comprehensive groups are not pseudofinite provided they have non-cyclic centre. The section includes a discussion of pseudofiniteness for $\omega$-categorical FAF groups. In Section~3 we note that the Boolean power construction does not yield counterexamples to our conjecture, that a counterexample will not arise from an ultraproduct of finite $p$-groups with large automorphism groups, and present the ring-theoretic analogue of the conjecture.

\medskip

{\bf Acknowledgement:} We warmly thank Bettina Eick for very helpful discussions related to the proof of Corollary~\ref{cor_nilp}, and Laurent Barthodi, for allowing us to include the proof of Lemma~\ref{laurent} which he provided. We also thank Christian d'Elb\'ee for several very helpful discussions.

\section{Saracino-Wood groups and variants}

In this section we reduce our conjecture to the case of nilpotent class~2 groups, and then show that several variants of the Saracino-Wood existentially closed nilpotent class~2 exponent $p$ group (for $p$ an odd prime) are not pseudofinite. We do not consider the case $p=2$ (where there are analogues of exponent 4). 
Since it is short and central to the paper, we first  include a proof of the following easy result. See also \cite[Proposition 1.3]{kruckman}, where the result is described as folklore.
\begin{lemma}\label{SOP} Let $M$ be an $\omega$-categorical pseudofinite structure. Then $M$ does not have the strict order property.
\end{lemma}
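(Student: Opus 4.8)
The plan is to argue by contraposition: assuming $M$ is $\omega$-categorical and has the strict order property, I will exhibit a first-order sentence true in $M$ with no finite model, so that $M$ fails to be pseudofinite. First I would unpack the hypothesis. The strict order property supplies a formula $\phi(\bar x,\bar y,\bar z)$, with $\bar x,\bar y$ ranging over $M^n$, such that in some model of $\Th(M)$, for some parameter tuple, $\phi(\cdot,\cdot,\bar z)$ is a preorder carrying an infinite totally ordered subset. This forces $\phi$-chains of every finite length in that model, hence (same theory) in $M$; and since an $\omega$-categorical countable structure is $\omega$-saturated, I may realise an infinite such chain inside $M$ itself. Thus there are $\bar c$ and tuples $\bar a_1,\bar a_2,\dots\in M^n$ with $\bar a_i<_{\bar c}\bar a_j$ for all $i<j$, where $\bar x<_{\bar c}\bar y$ abbreviates $\phi(\bar x,\bar y,\bar c)\wedge\neg\phi(\bar y,\bar x,\bar c)$.

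The next step is to pass to a single type. By $\omega$-categoricity, $\Aut(M)$ has only finitely many orbits on pairs from $M^n$ fixing $\bar c$ pointwise; colouring each pair $\{i,j\}$ with $i<j$ by the orbit of $(\bar a_i,\bar a_j)$ over $\bar c$ and applying the infinite Ramsey theorem, I may thin the chain to an infinite subchain on which all increasing pairs lie in a single orbit. Let $\theta(\bar x,\bar y,\bar z)$ be the formula defining that orbit over $\bar c$, which exists by $\omega$-categoricity. Then $\theta(\cdot,\cdot,\bar c)$ is nonempty, is contained in $<_{\bar c}$, and crucially satisfies the successor property $\forall\bar x\,\bar y\,(\theta(\bar x,\bar y,\bar c)\to\exists\bar w\,\theta(\bar y,\bar w,\bar c))$ in $M$: this holds of the sample pair $(\bar a_1,\bar a_2)$ with witness $\bar a_3$, and since all realisations of $\theta(\cdot,\cdot,\bar c)$ are conjugate over $\bar c$, homogeneity transfers it to every pair.

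Finally I would existentially quantify the parameters to obtain a genuine sentence
\[
\sigma:\ \exists\bar z\,\Big[\,\mathrm{Preord}_\phi(\bar z)\ \wedge\ \exists\bar x\,\bar y\,\theta(\bar x,\bar y,\bar z)\ \wedge\ \forall\bar x\,\bar y\,\big(\theta(\bar x,\bar y,\bar z)\to \bar x<_{\bar z}\bar y\big)\ \wedge\ \forall\bar x\,\bar y\,\big(\theta(\bar x,\bar y,\bar z)\to\exists\bar w\,\theta(\bar y,\bar w,\bar z)\big)\Big],
\]
where $\mathrm{Preord}_\phi(\bar z)$ is the (first-order) statement that $\phi(\cdot,\cdot,\bar z)$ is reflexive and transitive and $<_{\bar z}$ is the induced strict order. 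The previous paragraph shows $\sigma$ holds in $M$. But in any model of $\sigma$ I can pick a $\theta$-pair $(\bar u_0,\bar u_1)$ and repeatedly invoke the successor clause: given $\theta(\bar u_{k-1},\bar u_k)$ it yields $\bar u_{k+1}$ with $\theta(\bar u_k,\bar u_{k+1})$. Each $\theta$-pair is strict, so by transitivity the $\bar u_k$ form a strictly $<_{\bar z}$-increasing sequence; irreflexivity of $<_{\bar z}$ makes its terms pairwise distinct, whence the model is infinite. Thus $\sigma$ has no finite model, contradicting pseudofiniteness of $M$.

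The main obstacle, and the reason $\omega$-categoricity is indispensable, lies in the third paragraph: arbitrarily long finite chains are perfectly consistent with finiteness, so one cannot simply invoke the existence of chains of all lengths. The Ramsey reduction to a single orbit, together with the homogeneity of $\omega$-categorical structures, is precisely what upgrades the local fact ``something lies above $\bar a_i$'' into a uniform sentence forcing an unbounded, hence infinite, order.
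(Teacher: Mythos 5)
Your proof is correct and follows essentially the same route as the paper's: both use $\omega$-categoricity (finitely many $2$-types) plus Ramsey to extract a formula isolating the type of an increasing pair, and then write down a first-order sentence whose clauses force any model to be infinite. The only difference is the final infinitude-forcing clause—you use unboundedness ($\forall \bar x\,\bar y\,(\theta(\bar x,\bar y)\to\exists \bar w\,\theta(\bar y,\bar w))$) where the paper uses density ($\forall x\,y\,(\phi(x,y)\to\exists z(\phi(x,z)\wedge\phi(z,y)))$)—and both are justified by the same homogeneity argument, so this is an immaterial variation.
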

\begin{proof} 
Suppose that $M$ is $\omega$-categorical with the strict order property. Then (possibly naming parameters by constants) there is a 0-definable preorder $<$ on some power $M^d$ of $M$, with an infinite chain. By $\omega$-categoricity there are finitely many 2-types, so by Ramsey's theorem  there is a formula $\phi(x,y)$ isolating a complete 2-type and implying the formula $x<y$, and $\{a_i:i\in \omega\}\in M^d$ such that $\phi(a_i,a_j)$ holds whenever $i<j$. In particular, some sentence $\sigma$ of $\Th(M)$ expresses that $<$ is a  preorder, that $\phi(x,y)$ implies $x<y$, that $\exists x\exists y\phi(x,y)$, and that  $\forall x\forall y\big(\phi(x,y) \to \exists z(\phi(x,z)\wedge \phi(z,y))\big)$;  the last implication holds since $M\models \phi(a_1,a_3) \wedge \exists z(\phi(a_1,z)\wedge \phi(z,a_3))$, and because $\phi$ isolates a complete type.  Such a sentence clearly has no finite model. 
\end{proof}

We thus obtain a first step (Proposition~\ref{nilpot} of the introduction) towards proving Conjecture~\ref{mainq}. 
\begin{corollary}\label{SOPgroup}
Let $G$ be an $\omega$-categorical pseudofinite group. Then $G$ has a nilpotent 0-definable normal subgroup of finite index.
\end{corollary}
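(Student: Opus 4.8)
The plan is to obtain the corollary directly from Lemma~\ref{SOP} together with the structure theory of $\omega$-categorical groups. First I would observe that since $G$ is $\omega$-categorical and pseudofinite, Lemma~\ref{SOP} applies, so $G$ does not have the strict order property. The key input is then a known structural result: an $\omega$-categorical group without the strict order property is nilpotent-by-finite (this is exactly the group-theoretic content behind the structural analyses of $\omega$-categorical groups, cf. \cite{bcm, wilson}). Applying it to $G$ produces a nilpotent subgroup of finite index. I would stress that one cannot instead route through the stability-based results recalled in the introduction: a pseudofinite $\omega$-categorical structure (such as the random graph) need not be stable, so it is precisely the \emph{absence} of the strict order property, rather than stability, that must be exploited here.

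The second step upgrades `nilpotent-by-finite' to the $0$-definable normal witness demanded by the statement. Let $N \trianglelefteq G$ be nilpotent of finite index; such an $N$ exists because the normal core of a nilpotent finite-index subgroup has only finitely many conjugates (its normaliser has finite index), hence finite index, and it is nilpotent as a subgroup of a nilpotent group. I claim the Fitting subgroup $F(G)$ is the desired subgroup. Since $N \leq F(G)$ we have $[G:F(G)]<\infty$, so $F(G)/N$ is finite; choosing finitely many nilpotent normal subgroups $M_1,\ldots,M_s$ containing between them a transversal of $N$ in $F(G)$, the product $N M_1 \cdots M_s$ is a normal subgroup which therefore equals $F(G)$ and is nilpotent by Fitting's theorem (a finite product of nilpotent normal subgroups is nilpotent). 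Thus $F(G)$ is a characteristic nilpotent normal subgroup of finite index. Finally, being characteristic, $F(G)$ is invariant under $\Aut(G)$; and by $\omega$-categoricity (Ryll--Nardzewski), every $\Aut(G)$-invariant subset of $G$ is a finite union of orbits and so is $0$-definable. Hence $F(G)$ is $0$-definable, completing the argument.

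The main obstacle is the first step: locating and correctly invoking the known theorem that absence of the strict order property forces an $\omega$-categorical group to be nilpotent-by-finite, and checking that the hypotheses of whichever form of that result one cites are genuinely met. Once that is in hand, the remainder — passing to the normal core, the Fitting-subgroup computation, and the appeal to Ryll--Nardzewski to convert `characteristic' into `$0$-definable' — is routine group theory and standard $\omega$-categoricity bookkeeping.
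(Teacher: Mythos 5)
Your proof is correct and follows essentially the same route as the paper: apply Lemma~\ref{SOP} and then invoke the theorem that an $\omega$-categorical group without the strict order property is nilpotent-by-finite --- the correct reference for which is \cite[Theorem 1.2]{mac1}, not \cite{bcm} or \cite{wilson}, neither of which contains that statement. Your explicit upgrade to a characteristic (hence, by Ryll--Nardzewski, $0$-definable) nilpotent normal subgroup of finite index via the normal core and the Fitting subgroup is sound; the paper simply absorbs this into the citation.
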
 \label{nilp}
\begin{proof} By Lemma~\ref{SOP}, $G$ does not have the strict order property. The result now follows immediately from \cite[Theorem 1.2]{mac1}. 
\end{proof}

For any nilpotent group $G$ of class $c$ we write $$G=\g_1G>\g_2G>\cdots >\g_{c+1}G=1$$
for the lower central series of $G$. 

\begin{remark} \rm \label{commutators}
In arguments below we shall use that if $H$ is a nilpotent group of class 2 and $x,y\in H$ then for any $n\geq 1$ we have 
$[x,y]^n=[x^n,y]$. This is proved easily by induction. In particular, the exponent of $\g_2H$ divides that of $\g_1H/\g_2H$. 
We also use the identities $[xz,y]=[x,y]^z[z,y]$ and $[x,y]^{-1}=[y,x]$, the latter giving $[H,G]=[G,H]$ whenever $H$ is a subgroup of $G$.
\end{remark}

We aim next for Corollary~\ref{cor_nilp}, which is just Proposition~\ref{reduction} from the Introduction. First, we give a lemma, for which we are very grateful to Laurent Barthodi for supplying the proof.

\begin{lemma} \label{laurent}
Let $G$ be a $p$-group of class 3, and let $|\g_2G/\g_3G|=p^m$. Then $|\g_3G|\leq p^{2m^3}$. In particular $G$ is finite-by-abelian.
\end{lemma}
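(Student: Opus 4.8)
The plan is to exploit two features of a class-$3$ group: the top term $\gamma_3 G$ is central, so the commutator becomes multilinear, and the Hall--Witt identity collapses to the Jacobi relation. The decisive use of Jacobi will be to ``trap'' the otherwise unbounded outer variable inside a commutator quotient of order $p^m$.

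First I would record the multilinearity. Since $\gamma_4 G = 1$, the subgroup $\gamma_3 G$ is central, and a routine extension of the class-$2$ identities in Remark~\ref{commutators} (all correction terms now lying in $\gamma_4 G = 1$) shows that for $x \in \gamma_2 G$ and $g,h \in G$ one has $[x,gh]=[x,g][x,h]$ and $[xy,g]=[x,g][y,g]$. Thus $(x,g)\mapsto [x,g]$ is bi-additive into $\gamma_3 G$ and factors through $(\gamma_2 G/\gamma_3 G)\times(G/\gamma_2 G)$; in particular, writing $A=\gamma_2 G/\gamma_3 G$, for each $x\in G$ the map $\lambda_x\colon A \to \gamma_3 G$, $a\mapsto [a,x]$, is a homomorphism, so $|\mathrm{im}(\lambda_x)|\le |A|=p^m$. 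I would also note that the Hall--Witt identity, read modulo $\gamma_4 G$, becomes the additive Jacobi relation $[[x,y],z]\,[[y,z],x]\,[[z,x],y]=1$ in the abelian group $\gamma_3 G$.

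Next I would cut down the generators. As $|A|=p^m$ and $A$ is generated by the commutators $\overline{[u,v]}$, the Burnside basis theorem lets me choose at most $m$ \emph{single} commutators $[u_1,v_1],\dots,[u_s,v_s]$ (with $s\le m$) whose images generate $A$. By bi-additivity $\gamma_3 G=[\gamma_2 G,G]$ is then generated by the elements $[[u_j,v_j],g]$ as $g$ ranges over $G$. Applying Jacobi to each generator gives $[[u_j,v_j],g]=[[g,u_j],v_j]^{-1}[[v_j,g],u_j]^{-1}$, and since $[g,u_j],[v_j,g]\in\gamma_2 G$ this lies in $\mathrm{im}(\lambda_{v_j})+\mathrm{im}(\lambda_{u_j})$. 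Hence $\gamma_3 G \subseteq \sum_{x\in\{u_1,v_1,\dots,u_s,v_s\}}\mathrm{im}(\lambda_x)$, a sum of at most $2m$ subgroups each of order at most $p^m$, whence $|\gamma_3 G|\le p^{2m^2}\le p^{2m^3}$.

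The main obstacle is exactly this flip. A naive estimate bounds $\gamma_3 G=[\gamma_2 G,G]$ only in terms of $|G/\gamma_2 G|$, which is typically infinite and so useless; the point of the Jacobi relation is to move the free variable $g$ from the outer slot into an inner commutator $[g,u_j]$ or $[v_j,g]$, which automatically lands in $A$ and is therefore controlled by $p^m$. The remaining work is bookkeeping: checking that the correction terms genuinely vanish in class $3$, and that scaling sends commutator images to commutator images (since $\overline{[u,v]}^{\,n}=\overline{[u^n,v]}$ in $A$) so that the Burnside selection can be made within the single commutators. Finally, for the ``in particular'' clause, the bound makes $\gamma_3 G$ finite, hence $\gamma_2 G=G'$ finite (a finite extension of the finite group $A$) and $G/G'$ abelian, so $G$ is finite-by-abelian.
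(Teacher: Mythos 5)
Your proof is correct, and it rests on the same key idea as the paper's: the Hall--Witt identity is used to move the unbounded variable $g\in G$ out of the outer slot of $[[u,v],g]$ and into an inner commutator, which then lands in the finite group $A=\gamma_2G/\gamma_3G$ and is therefore controlled by $p^m$. The packaging differs, though. The paper forms an auxiliary subgroup $H$ generated modulo $\gamma_3G$ by the at most $2m$ entries of the chosen commutators, checks that $\gamma_2G\le H$ so that $H$ is normal, deduces $\gamma_3G=[[H,H],H]$ from the subgroup-level Hall--Witt consequence $[[H,H],G]\le [[G,H],H]$, and then counts $2m^2$ generators of $\gamma_3G$, each of order at most $p^m$, which is where the exponent $2m^3$ comes from. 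You instead work element-wise with the trilinear Jacobi form of Hall--Witt and bound $\gamma_3G$ by a product of at most $2m$ homomorphic images of $A$, each of order at most $p^m$; this avoids the auxiliary subgroup entirely and yields the sharper bound $|\gamma_3G|\le p^{2m^2}$, which of course implies the stated $p^{2m^3}$. The linearity and well-definedness checks you flag (bi-additivity of the commutator on $\gamma_2G\times G$, factoring through $A$, the collapse of Hall--Witt to Jacobi modulo $\gamma_4G=1$, and the Burnside extraction of at most $m$ single commutators) all go through in class~$3$, so there is no gap.
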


\begin{proof} 
 First observe that $\g_2G/\g_3G$ has exponent at most $p^m$, and every generating set has size at most $m$. Thus $\g_2G/\g_3G$  is generated (modulo $\gamma_3G$) by at most $m$ commutators $[a,b]$ in $G$. Let $H\leq G$ be a subgroup generated modulo $\g_3G$ by a set $S$ of size  at most $2m$ of these $a, b$.  Then $\g_2G\leq H$, so $H$ is normal in $G$. Thus $$[H,H]\g_3G=\g_2G=[H,G]\g_3G=[G,G].$$
For $x,y\in H$ and $z\in G$, the Hall-Witt identity
$$ {\displaystyle [[x,y^{-1}],z]^{y}\cdot [[y,z^{-1}],x]^{z}\cdot [[z,x^{-1}],y]^{x}=1}$$
 and the fact that $H$ is normal in $G$, it now follows that
 $$[[H,H],G] \leq [[H,G],H]\cdot [[G,H],H]=[[G,H],H].$$

Since $\g_3G\leq Z(G)$ we now have
\begin{align*}
\gamma_3G  &= [\g_2G,G] = [[H,H]\gamma_3G,G]=[[H,H],G]\leq [[G,H],H]\\
& =[[G,H]\g_3G,H]=[[H,H],H].
\end{align*}

Note that since $\g_3G\leq Z(G)$, for $x,z\in S$ and $y\in \g_2G$ we have $[x,y]\in Z(G)$ and so $[xz,y]=[x,y]^z[z,y]=[x,y][z,y]$. Hence  $\gamma_3G$ is generated by at most $2m^2$ commutators of the form $[x,y]$ with $x\in S$ a generator of $H$ and $y\in \g_2G$ a generator of $\gamma_2G/\gamma_3G$.  By Remark~\ref{commutators} (applied to the class 2 group  $\g_2G$), $\g_3H$ has exponent at most $p^m$. Thus  $|\g_3G|\leq p^{f(m)}$ where $f(m)=2m^3$.
\end{proof}

\begin{corollary}\label{cor_nilp}
Suppose (A) every $\omega$-categorical pseudofinite  $p$-group of class at most 2 is FAF. 
\\Then the same conclusion holds for all $\omega$-categorical pseudofinite groups.
\end{corollary}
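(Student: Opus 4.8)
The plan is to reduce, in two steps, to the case of a nilpotent $p$-group, and then to induct on the nilpotency class, using hypothesis (A) for the top class-$2$ layer and Lemma~\ref{laurent} (through pseudofiniteness) to control the bottom of the lower central series.

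First I would reduce to the nilpotent case. By Corollary~\ref{SOPgroup}, $G$ has a $0$-definable nilpotent normal subgroup $N$ of finite index. Being $0$-definable in the $\omega$-categorical group $G$, $N$ is again $\omega$-categorical, and being of finite index it is again pseudofinite (a standard ultraproduct argument: in $G\equiv\prod_{\mathcal U}G_i$ the formula defining $N$ cuts out finite-index subgroups $N_i\leq G_i$, and $N\equiv\prod_{\mathcal U}N_i$). Moreover FAF passes from $N$ up to $G$: a finite-by-abelian characteristic subgroup of $N$ of finite index, which exists by the characteristic-witness remark in the introduction, is then normal and of finite index in $G$. So I may assume $G$ is nilpotent. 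Next, every $\omega$-categorical group has finite exponent, so the nilpotent torsion group $G$ is the direct product of finitely many of its primary components $G_p$; each $G_p$ is $0$-definable, hence $\omega$-categorical and pseudofinite, and FAF is preserved under finite direct products and passes to direct factors. Thus I may assume $G$ is a nilpotent $p$-group, of class $c$ say.

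I would then induct on $c$. For $c\leq 2$ this is exactly hypothesis (A). For $c\geq 3$, note that $G/\g_3G$ has class $\leq 2$ and $\g_3G$ has class $\leq c-2$ (since $\g_j(\g_3G)\leq\g_{j+2}G$), and both are $\omega$-categorical and pseudofinite as definable sections of $G$; hence $G/\g_3G$ is FAF by (A) and $\g_3G$ is FAF by the induction hypothesis. The task is to combine these two FAF statements into one for $G$. After factoring out a finite characteristic subgroup (FAF lifts through finite normal subgroups) and passing to the finite-index characteristic subgroup supplied by FAF of $G/\g_3G$, I may assume that $\g_3G$ is abelian and that the class-$2$ quotient is finite-by-abelian up to finite index. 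To finish I would pass to finite approximations $G\equiv\prod_{\mathcal U}G_i$ with each $G_i$ a finite $p$-group of class $\leq c$, reformulate FAF as the existence of uniform bounds $(d,e)$ — a subgroup of index $\leq d$ with derived subgroup of size $\leq e$, a first-order condition on the $G_i$ — and apply Lemma~\ref{laurent} to the class-$3$ quotients $G_i/\g_4G_i$: the explicit bound $|\g_3|\leq p^{2m^3}$ converts control of $|\g_2G_i/\g_3G_i|$ into control of $|\g_3G_i/\g_4G_i|$, which together with the uniform FAF data coming from (A) and from the induction hypothesis should yield uniform FAF bounds for the $G_i$, and hence FAF for $G$.

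The main obstacle is precisely this last combination. The delicate point is that FAF of $G/\g_3G$ asserts only that \emph{some} finite-index subgroup has finite derived subgroup, not that $\g_2G/\g_3G$ itself is finite, so Lemma~\ref{laurent} cannot be applied off the shelf; the cross-layer commutators $[\g_2G,G]=\g_3G$ must be controlled, and it is here that pseudofiniteness is used in an essential way, turning the two qualitative FAF hypotheses (for the quotient and for $\g_3G$) into the uniform quantitative bounds on the finite models $G_i$ on which Lemma~\ref{laurent} can act. A secondary technical point to verify is that the defining formulas for the characteristic subgroups $\g_kG$ cut out the genuine lower central series terms $\g_kG_i$ in the finite approximations, which is where $\omega$-categoricity (uniform definability) enters.
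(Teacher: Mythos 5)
Your reductions to the nilpotent case and then to a $p$-group match the paper, and you correctly identify Lemma~\ref{laurent} as the engine of the argument. But the heart of your proof --- combining FAF for $\g_3G$ and FAF for $G/\g_3G$ into FAF for $G$ --- is exactly the step you leave unresolved, and it is a genuine gap, not a technicality. Extensions of FAF groups by FAF groups are not in general FAF: the Saracino--Wood group itself has $Z(G)$ and $G/Z(G)$ both abelian, yet is not FAF. Your proposed repair (pass to an ultraproduct $\prod_{\mathcal U}G_i$, extract uniform bounds $(d,e)$, apply Lemma~\ref{laurent} to $G_i/\g_4G_i$) is only asserted to ``should yield'' the conclusion; as you yourself observe, FAF of the class-2 quotient gives no control of $|\g_2G/\g_3G|$, which is precisely the hypothesis Lemma~\ref{laurent} needs, and nothing in your sketch supplies it. The finite-index subgroup witnessing FAF of $G/\g_3G$ also has a lower central series unrelated to that of $G$, so the bookkeeping you defer does not obviously close.

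The missing idea in the paper's proof is a normalization made \emph{before} the induction: since an $\omega$-categorical group has only finitely many characteristic subgroups, and since under (A) a finite piece at the top or bottom of a class-2 group is absorbed, one may assume $G$ has \emph{no} proper characteristic subgroup of finite index and \emph{no} non-trivial finite characteristic subgroup, and then the goal becomes to show $G$ is abelian. With this in place the induction is run on $\bar G=G/\g_kG$ where $\g_kG$ is the last non-trivial term of the lower central series (not on the subgroup $\g_3G$): the inductive FAF conclusion for $\bar G$ collapses, via the normalization, to ``$\bar G'$ is finite and central'', so $\bar G$ has class at most $2$, forcing $k=3$ and $\g_2G/\g_3G\cong\bar G'$ finite. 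Lemma~\ref{laurent} then applies directly to the abstract group $G$ --- no finite approximations, no uniform bounds --- and yields $\g_3G$ finite, contradicting the normalization. This is the device that turns the qualitative FAF hypothesis into the quantitative finiteness your argument lacks; without it, your outline does not constitute a proof.
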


\begin{proof} In the argument below, we always assume that normal subgroups witnessing the FAF condition are characteristic and hence 0-definable; see the discussion of FAF groups after Conjecture~\ref{mainq}.

 Let $G$ be an $\omega$-categorical pseudofinite group. By Corollary~2.\ref{nilp} and the last paragraph we may suppose that $G$ is nilpotent. As any periodic nilpotent group is a direct product of its Sylow subgroups, we may further suppose that $G$ is a $p$-group for some prime $p$.
 
Observe that under (A), any pseudofinite $\omega$-categorical $p$-group with a class~2 definable normal subgroup of finite index, or with a finite normal subgoup with class~2 quotient, is FAF. For by (A), the class~2 piece is FAF, and absorbs the finite piece at the top or bottom.

Since there are only finitely many characteristic subgroups, we may suppose by the previous paragraph that $G$ has no proper characteristic subgroup  of finite index. Likewise, we may suppose that $G$ has no non-trivial finite characteristic subgroup. Hence we aim to show that $G$ is abelian.
 
  We now use induction on the nilpotency class of $G$ and let $1\neq \g_kG\leq Z(G)$ be the smallest non-trivial term of the lower central series. We need to show $k=1$. By assumption (A) and our reduction of $G$ we know that $k\neq 2$. So suppose towards a contradiction that $k\geq 3$. By induction and since $G$ has no proper characteristic subgroup of finite index, $\bar{G}=G/\g_kG$ has finite commutator group $\bar{G}'$ and $\bar{G}= C_{\bar{G}}(\bar{G}')$. Thus $G$ has nilpotency class 3.
Consider  the lower central series
  $G=\g_1G>\g_2G>\g_3G>1$ and note that $\g_2G/\g_3G\cong\bar{G}'$ is finite.
  By Lemma~\ref{laurent}, $G$ is finite-by-abelian, with a non-trivial finite characteristic subgroup, contradicting our assumption.
  Hence $k=1$ and $G$ is abelian.
\end{proof}

Our next goal is to show that certain constructions of $\omega$-categorical $p$-groups discussed  in the papers  \cite{apps1,baudisch1,baudisch2,baudisch3,sar-wood, cher-sar-wood,maier} are not pseudofinite. In order to make the idea as explicit as possible in the simplest case, we first consider existentially closed class~2 $p$-groups of exponent $p$. 

Let $p$ be an odd prime. By \cite[Theorem 3.9]{sar-wood}, the theory $T_{\nil2,p}$ of existentially closed nilpotent class~2 exponent $p$ groups is complete, and is axiomatised by a sentence $\rho$ expressing that a model $G$ is a nilpotent, class~2 exponent $p$ group and that the centre is the set of commutators, an axiom scheme expressing that the centre is infinite, and the following axiom scheme $\Sigma_n:=\{\sigma_n:n>0\}$. Here $\sigma_n$ expresses: 
$$(\forall \mbox{~linearly independent~} g_1,\ldots,g_n\in G\setminus Z(G))( \forall h_1,\ldots,h_n\in Z(G))(\exists k\in G)$$ $$(\bigwedge_{i=1}^n [g_i,k]=h_i).$$
Here linear independence is interpreted in the sense of the $\mathbb{F}_p$-vector space $G/Z(G)$.
As noted in \cite[Theorem 3.9]{sar-wood}, $T_{\nil2,p}$ is $\omega$-categorical. However, the following lemma yields that $\rho\wedge\sigma_1\wedge\sigma_2\wedge \tau$ has no finite model, where $\tau$ expresses that the centre has order at least $p^2$. 

\begin{lemma}\label{bil}
Let $p$ be an odd prime, and let $(V,W,\beta)$ be a 2-sorted structure where $V,W$ are  vector spaces over $\mathbb{F}_p$ (each sort viewed in the usual language of $\mathbb{F}_p$-modules) of dimension at least 2 and $\beta:V\times V \to W$ is bilinear. Let $\chi$ be a sentence expressing this information, and $\psi$ be the sentence
$$(\forall \mbox{~linearly independent~}v_1,v_2\in V) (\forall w_1,w_2\in W)$$ $$ (\exists z\in  V)(\beta(v_1,z)=w_1\wedge \beta(v_2,z)=w_2).$$
Then $\chi\wedge \psi$ has no finite model.
\end{lemma}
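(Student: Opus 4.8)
The plan is to argue by a counting (pigeonhole) argument that $\psi$ forces $V$ to be large relative to $W$ in a way that, combined with the existence of the bilinear map, is inconsistent in the finite case. Suppose for contradiction that $(V,W,\beta)$ is a finite model of $\chi\wedge\psi$, with $|V|=p^n$ and $|W|=p^m$, where $n,m\geq 2$. Fix two linearly independent vectors $v_1,v_2\in V$ and consider the map
\[
\Phi:V\to W\times W,\qquad \Phi(z)=(\beta(v_1,z),\beta(v_2,z)).
\]
This map is $\mathbb{F}_p$-linear in $z$ by bilinearity of $\beta$. The sentence $\psi$ asserts precisely that $\Phi$ is surjective onto $W\times W$. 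First I would observe that surjectivity of a linear map between finite-dimensional $\mathbb{F}_p$-vector spaces forces $|V|=p^n\geq |W\times W|=p^{2m}$, so $n\geq 2m$.

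The key step is to extract a contradiction from the uniformity of this for \emph{all} choices of a linearly independent pair. Since $\psi$ quantifies over every linearly independent pair $v_1,v_2$, the analogous surjectivity holds for each such pair. The natural way to exploit this is to fix a single $v_1\neq 0$ and vary $v_2$: for each $v_2$ independent from $v_1$, the linear map $z\mapsto \beta(v_1,z)$ restricted to $\ker(z\mapsto\beta(v_2,z))$ must still hit all of $W$, which tightly constrains the kernels $U_v:=\ker(z\mapsto\beta(v,z))$. In particular, surjectivity of $\Phi$ for the pair $(v_1,v_2)$ is equivalent to $U_{v_1}+U_{v_2}=V$ together with $\beta(v_1,\cdot)$ and $\beta(v_2,\cdot)$ each being surjective onto $W$ (so $\dim U_v=n-m$ for every $v\neq 0$). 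Thus for \emph{every} linearly independent pair we need $\dim(U_{v_1}\cap U_{v_2})=2(n-m)-n=n-2m$, a fixed value independent of the pair. I would then count incidences: summing $|U_v\setminus\{0\}|$ over all nonzero $v$, or equivalently counting pairs $(v,z)$ with $v\neq 0$, $z\neq 0$, and $\beta(v,z)=0$, and comparing this with the symmetric count in $z$, to force a numerical inequality that fails once $|W|\geq p^2$.

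The main obstacle I anticipate is making the counting genuinely contradictory rather than merely restrictive: a single linear constraint $n\geq 2m$ is satisfiable, so the contradiction must come from requiring surjectivity of $\Phi$ \emph{simultaneously} and \emph{uniformly} across all independent pairs. I expect the cleanest route is to count the set $Z=\{(v,z)\in (V\setminus\{0\})\times(V\setminus\{0\}):\beta(v,z)=0\}$ in two ways. Fixing $v$, the fibre has size $|U_v|-1=p^{n-m}-1$, giving $|Z|=(p^n-1)(p^{n-m}-1)$. On the other hand, one derives from the pairwise kernel-intersection constraint $\dim(U_{v_1}\cap U_{v_2})=n-2m$ a different count of how many $z\neq 0$ lie in some $U_v$, and the incompatibility of these two expressions (which should manifest as the impossibility of a regular enough intersection pattern unless $m\leq 1$) yields the contradiction. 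The delicate point is handling the alternating-versus-symmetric behaviour of $\beta$: since we are not told $\beta$ is symmetric or alternating, I would avoid assuming $U_v$ as a left kernel equals the right kernel, and instead work throughout with the fixed one-sided map $z\mapsto(\beta(v_1,z),\beta(v_2,z))$, where bilinearity in the second slot is all that is used.
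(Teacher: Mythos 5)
Your setup coincides with the paper's: for each nonzero $v$ the map $\beta_v := \beta(v,\cdot):V\to W$ is linear and surjective (so $\ker\beta_v=:U_v$ has codimension $m$), and surjectivity of $\Phi$ for an independent pair forces $U_{v_1}+U_{v_2}=V$, hence $\dim(U_{v_1}\cap U_{v_2})=n-2m$. The gap is in the finishing move. The double count you actually write down --- counting $Z=\{(v,z): v,z\neq 0,\ \beta(v,z)=0\}$ first by fibres over $v$ and then by fibres over $z$ --- provably does \emph{not} produce a contradiction: the first count gives $|Z|=(p^n-1)(p^{n-m}-1)$, while the second gives $|Z|=\sum_{z\neq 0}(p^{\dim L_z}-1)$ where $L_z=\{v:\beta(v,z)=0\}$ has dimension at least $n-m$ (its image lies in $W$). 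Comparing the two merely forces $\dim L_z=n-m$ for every $z\neq 0$, i.e.\ that $\beta$ is ``nondegenerate on the left'' as well; the two expressions are consistent, and indeed genuine examples with $m=1$ (symplectic forms) realise exactly this incidence count. A first-moment count of single incidences simply cannot see the pairwise condition $\dim(U_{v_1}\cap U_{v_2})=n-2m$, which is where the contradiction must come from; and counting ``how many $z$ lie in some $U_v$'' via the union is not straightforward either, since triple intersections of the $U_v$ are not controlled.

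The paper's resolution, which your writeup is missing, is to dualise: the annihilators $U_v^{\perp}\leq V^{*}$ form a family of $\frac{p^n-1}{p-1}$ subspaces (one per line $\langle v\rangle$), each of dimension $m$, and the condition $U_{v_1}+U_{v_2}=V$ says precisely that they pairwise intersect in $\{0\}$. They therefore occupy $\frac{p^n-1}{p-1}(p^m-1)$ distinct nonzero vectors of $V^{*}$, which exceeds $p^n-1$ as soon as $m\geq 2$. (A primal alternative that does work is a \emph{second}-moment count: count triples consisting of an unordered pair of distinct lines together with a common nonzero vector of their kernels, getting $\binom{N}{2}(p^{n-2m}-1)$ with $N=\frac{p^n-1}{p-1}$ on one side and at least $(p^n-1)\binom{M}{2}$ with $M=\frac{p^{n-m}-1}{p-1}$ on the other; these are incompatible exactly when $m\geq 2$. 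But some such use of the pairwise intersection data is indispensable, and as it stands your proposal stops short of it.) Your cautions about not assuming $\beta$ symmetric or alternating are well taken and cause no problem in either completion.
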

\begin{proof} Suppose for a contradiction that $(V',W',\beta)$ is a finite model of $\chi\wedge \psi$, with $\dim(V')=n>1$ and $\dim(W')=d>1$. For each $v\in V'$, the map $\beta_v:V'\to W'$ given by $\beta_v(x)=\beta(v,x)$ is linear, and is surjective as $(V',W',\beta)\models \psi$. Thus the kernel $K_v$ of $\beta_v$ has index $p^d$. Furthermore, $\psi$ ensures that if $v_1,v_2\in V'$ are linearly independent then any two cosets $K_{v_1}+a_1$ and $K_{v_2}+a_2$ intersect: indeed, if $w_i=\beta(v_i,a_i)$ for $i=1,2$ then  because  $(V',W',\beta)\models\psi$ there is $z\in V'$ with $\beta(v_1,z)=w_1\wedge \beta(v_2,z)=w_2$, and we have $z\in (K_{v_1}+a_1) \cap (K_{v_2}+a_2)$.
Thus $K_{v_1}+K_{v_2}=V'$, for if $z\in V'$ there is $u_1\in K_{v_1}\cap (K_{v_2}+z)$, so $u_1=u_2+z$ for some $u_2\in K_{v_2}$, and $z=u_1-u_2\in K_{v_1}+K_{v_2}$. Thus, we have a collection consisting of $ \frac{p^n-1}{p-1}$ such kernels $K_v$, each of codimension $d$ in $V'$, such that any two span $V'$. By moving to the dual space $(V')^*$ and taking annihilators, we find a family  consisting of $\frac{p^n-1}{p-1}$ subspaces of $(V')^*$ each of dimension $d$, with pairwise intersection $\{0\}$. Since $d>1$, this is clearly impossible by counting vectors in $(V')^{*}$.

\end{proof}

\begin{corollary}\label{bil-to-group}
Suppose that $G$ is an $\omega$-categorical nilpotent group of class~2  and exponent $p$ (where $p$ is an odd prime)  with both the centre $Z$ and the group $G/Z$  elementary abelian of rank at least 2, and such that $G\models \sigma_1\wedge \sigma_2$.  Then $G$ is not pseudofinite.

\end{corollary}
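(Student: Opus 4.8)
The plan is to package the commutator operation of $G$ as a bilinear map and apply Lemma~\ref{bil} verbatim. Write $Z=Z(G)$, which is $0$-definable in any group by the formula $\forall y\,(xy=yx)$, and regard the elementary abelian groups $Z$ and $G/Z$ as $\mathbb{F}_p$-vector spaces. Since $G$ has class $2$, every commutator lies in $Z$, and the identities of Remark~\ref{commutators} ($[x_1x_2,y]=[x_1,y][x_2,y]$, $[x,y_1y_2]=[x,y_1][x,y_2]$, and $[xz,y]=[x,y]$ for $z\in Z$) show that the commutator map descends to a well-defined biadditive map $\beta\colon G/Z\times G/Z\to Z$, $\beta(\bar x,\bar y)=[x,y]$. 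Because $G$ has exponent $p$ and $Z$ is elementary abelian, $\beta(n\bar x,\bar y)=[x^n,y]=[x,y]^n$ by Remark~\ref{commutators}, so $\beta$ is $\mathbb{F}_p$-bilinear. Thus $(V,W,\beta):=(G/Z,Z,\beta)$ is a two-sorted $\mathbb{F}_p$-bilinear structure interpreted in $G$ by formulas that do not depend on the model, and by hypothesis $\dim V\geq 2$ and $\dim W\geq 2$.

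Next I would observe that $\sigma_2$ is exactly the sentence $\psi$ of Lemma~\ref{bil} transported to this structure: linear independence of $g_1,g_2\in G\setminus Z$ means precisely independence of $\bar g_1,\bar g_2$ in $V$, and $\sigma_2$ asserts that for all $h_1,h_2\in Z=W$ there is $k$ with $\beta(\bar g_1,\bar k)=h_1$ and $\beta(\bar g_2,\bar k)=h_2$. Hence $(V,W,\beta)\models\chi\wedge\psi$. Now form the single group sentence $\theta$ asserting that $G$ has class $2$ and exponent $p$, that there exist two elements independent modulo $Z$ and two independent elements of $Z$ (each a finite conjunction for the fixed prime $p$), and $\sigma_2$. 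Then $G\models\theta$. The key point is that $\theta$ has no finite model: if $G^{*}$ were a finite model of $\theta$, then the same interpretation applied to $G^{*}$ produces a finite two-sorted structure $(G^{*}/Z(G^{*}),Z(G^{*}),\beta^{*})$ with both sorts of dimension at least $2$ (from the rank clauses of $\theta$) and satisfying $\psi$ (from $\sigma_2$), contradicting Lemma~\ref{bil}. Since $G\models\theta$ while $\theta$ has no finite model, $G$ is not pseudofinite.

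The only points requiring care are bookkeeping rather than substance: one must check that each clause of $\theta$ is genuinely first order for the fixed prime $p$, so that it transfers to any hypothetical finite model, and that the interpretation of $(V,W,\beta)$ is given by the same formulas in every model, so that a finite model of $\theta$ really yields a finite model of $\chi\wedge\psi$. I expect no genuine obstacle here. Note that $\omega$-categoricity is not used in the argument beyond placing us in the intended setting; indeed the hypotheses already force $G$ to be infinite, since a finite $G$ would itself be a finite model of $\theta$ and contradict Lemma~\ref{bil}.
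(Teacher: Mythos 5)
Your proposal is correct and is essentially the paper's own argument: the paper likewise sets $V=G/Z(G)$, $W=Z(G)$, takes $\beta$ to be the induced commutator map, and invokes Lemma~\ref{bil}. You merely spell out more explicitly the (routine) transfer of the interpretation to a hypothetical finite model, and your observation that $\omega$-categoricity is not actually needed is accurate.
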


\begin{proof}This follows immediately from Lemma~\ref{bil}. Indeed, let $G\models T_{\nil2,p}$, and  put $V=G/Z(G)$ and $W=Z(G)$ -- these are elementary abelian $p$-groups and so vector spaces over $\mathbb{F}_p$. Let $\beta$ be the commutator map $G/Z(G)\times G/Z(G) \to W$ given by $[gZ(G),hZ(G)]=[g,h]$. The axioms $\rho \wedge \sigma_1\wedge\sigma_2$ yield that $(V,W,\beta)\models  \chi \wedge \psi$.
\end{proof}

This yields almost immediately the following result, and hence gives Theorem~\ref{sarwood}. The groups $D(n)$ (for $n$ finite) are $\omega$-categorical nilpotent class~2 and exponent $p$, with centre elementary abelian of rank $n$. By \cite[Theorem  3]{baudisch-neo}, they are supersimple of SU-rank 1. Note that for $n=1$ the group is extraspecial, so pseudofinite.

\begin{corollary}\label{nil-pseud}
Let $p$ be an odd prime.
\begin{enumerate}
\item[(i)] The theory $T_{\nil2,p}$ is not pseudofinite.
\item[(ii)] For $n\geq 2$ the group $D(n)$ discussed by Baudisch in \cite{baudisch-neo} is not pseudofinite.
\end{enumerate}
\end{corollary}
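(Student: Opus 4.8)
The plan is to deduce both parts directly from Corollary~\ref{bil-to-group}, which has already reduced the problem to checking the hypotheses on each group. The strategy is identical for both: identify the centre $Z$ and quotient $G/Z$ as $\mathbb{F}_p$-vector spaces of dimension at least $2$, confirm the relevant axioms $\sigma_1\wedge\sigma_2$, and invoke Corollary~\ref{bil-to-group} to conclude non-pseudofiniteness.

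For part (i), I would take $G\models T_{\nil2,p}$, the unique countable existentially closed model. By the axiomatisation recalled before Lemma~\ref{bil}, $G$ is nilpotent class~$2$ of exponent $p$ with $Z(G)$ equal to the set of commutators, $Z(G)$ infinite, and $G\models\Sigma_n$ for all $n$; in particular $G\models\sigma_1\wedge\sigma_2$. Since $G$ has exponent $p$, both $Z(G)$ and $G/Z(G)$ are elementary abelian, hence $\mathbb{F}_p$-vector spaces. Infinitude of $Z(G)$ gives $\dim Z(G)\geq 2$; and $G$ is nonabelian (else there are no nontrivial commutators to fill $Z(G)$), so $\dim(G/Z(G))\geq 2$ as well. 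All hypotheses of Corollary~\ref{bil-to-group} are met, so $G$ is not pseudofinite, which is exactly the statement that $T_{\nil2,p}$ is not pseudofinite.

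For part (ii), I would record the structural facts about $D(n)$ stated in the paragraph preceding the corollary: for finite $n$, $D(n)$ is $\omega$-categorical, nilpotent of class~$2$ and exponent $p$, with centre elementary abelian of rank $n$. When $n\geq 2$ this gives $\dim Z(D(n))=n\geq 2$, and the non-abelianity of $D(n)$ together with exponent $p$ again forces $\dim(D(n)/Z)\geq 2$. The one point that requires citation rather than recomputation is that $D(n)\models\sigma_1\wedge\sigma_2$; this follows because $D(n)$ is (a reduct of) an existentially closed structure in Baudisch's sense, so the commutator map is surjective on pairs in the manner $\sigma_1\wedge\sigma_2$ demands. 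With this in hand, Corollary~\ref{bil-to-group} applies verbatim.

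The only genuine obstacle is verifying $\sigma_1\wedge\sigma_2$ for $D(n)$: unlike the Saracino–Wood case, where this is built into the axioms, here one must extract it from the construction in \cite{baudisch-neo}. I expect this to be a short appeal to existential closedness (the defining property guarantees that for independent $g_1,g_2$ and arbitrary central targets $h_1,h_2$ there is a solution $k$), so the main work is citing the right property of $D(n)$ rather than any new calculation. Everything else is immediate from Corollary~\ref{bil-to-group}.
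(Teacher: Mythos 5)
Your proposal is correct and follows essentially the same route as the paper: both parts are deduced from Corollary~\ref{bil-to-group}, with part (i) immediate from the axiomatisation of $T_{\nil2,p}$ and part (ii) resting on the fact that $D(n)$ satisfies $\sigma_1\wedge\sigma_2$. For the one step you flag as needing citation, the paper likewise derives $\sigma_1\wedge\sigma_2$ for $D(n)$ from the richness of the Fra\"iss\'e construction (over the class of finite class~2 exponent~$p$ groups with named central constants $c_1,\ldots,c_n$ containing $G'$), suggesting the verification is easiest in the associated category of alternating bilinear maps with $W=\langle c_1,\ldots,c_n\rangle$ --- which is exactly the ``appeal to existential closedness'' you anticipate.
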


\begin{proof} Case (i) follows immediately from the corollary by the above axiomatisation of  $T_{\nil2,p}$. The groups $D(n)$ in (ii) are built by an amalgamation argument,  working with the class of finite nilpotent class~2 exponent $p$ groups $G$ expanded by linearly independent constants $c_1,\ldots,c_n$ such that $G'\leq \langle c_1,\ldots,c_n\rangle \leq Z(G)$. 
 These also satisfy $\sigma_1\wedge \sigma_2$; this is perhaps  most easily verified by working in the category $\mathbb{B}^P$ with objects $(V,W,\beta)$ where $V,W$ are vector spaces over $\mathbb{F}_p$ and $\beta:V\times V \to W$ is an alternating bilinear form, as described in \cite[Section 3]{baudisch-neo}; we may take $W=\langle c_1,\ldots,c_n\rangle$. 
\end{proof}

\begin{remark}\rm 
The counting argument in Lemma~\ref{bil} is quite extravagant, and does not seem to require the full strength of $\psi$. This suggests a conceivable route to a counterexample to our conjecture, by building a group with sparse failure of $\sigma_2$, possibly exploiting the Shelah-Spencer zero-one laws from \cite{spencer}. 

\end{remark}

We now turn to Apps's more general class of {\em comprehensive} groups, which we briefly summarise. First recall that if $A$ is an abelian $p$-group and $g\in A$, then the {\em height} $\h_A(g)$ is defined to be
$\sup\{n\geq 0:g=h^{p^n} \mbox{~for some~}h\in A\}$. A subgroup $B$ of $A$ is {\em pure in $A$} if $\h_B(g)=\h_A(g)$ for all $g\in B$. For a group $A$ of finite exponent, write $\exp(A)$ for the exponent of $A$. Building on work in \cite{sar-wood}, for $p$ a prime  Apps \cite{apps1} defines a nilpotent class~2 $p$-group $G$ with centre $Z$ to be {\em comprehensive} if it satisfies the following condition, where $|g|$ denotes the order of a group element $g$, $G^*=G/Z$, and for $g\in G$, $g^*$ denotes its image in $G^*$:\\

\noindent
($*$)  \hspace{.5cm} Let $A$ be a finite pure subgroup of $G^*$, let $\alpha\in \Hom(A,Z)$, let $w\in Z$ and $r\in \mathbb{N}$, and suppose $\exp(\alpha(A))\leq p^r\leq \exp(G^*)$ and $p^r|w|\leq \exp(G)$. Then there is $g\in G$ such that 
$\langle A,g^*\rangle$ is pure in $G^*$, $|g^*|=p^r, g^{p^r}=w$, and
$[a, g^*]=\alpha(a)$ for all $a\in A$. \\

Given an abelian $p$-group $Z$ and $t,u\in \mathbb{N}$, Apps defines $\mathcal{N}_2(p^u,p^t/Z)$ to be the class of groups $G$ of exponent $p^u$ with centre $Z$ such that $\exp(G^*)=p^t$. He proves the following (restricted below to finite exponent, the area of our interest).

\begin{theorem}\cite[Theorem C]{apps1} \label{comp}
Let $p$ be prime, $Z$ a countable abelian group of finite exponent $p^s$ and suppose $t,u\in \mathbb{N}$ with $s+t\geq u\geq s\geq t$, with also $u> t$ if $p=2$. Then there is a unique countable comprehensive group $G$  in $\mathcal{N}_2(p^u,p^t/Z)$, and $G$ is $\omega$-categorical. 
\end{theorem}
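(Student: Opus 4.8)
The plan is to read Theorem~\ref{comp} as a Fra\"iss\'e-style result in which condition ($*$) is exactly the richness axiom saying that prescribed one-point extensions of a finite configuration are already realised inside $G$; all three assertions---existence, uniqueness, and $\omega$-categoricity---should then follow from a single group-theoretic amalgamation lemma together with a back-and-forth argument. Throughout I would work with the abelian group $G^*=G/Z$ of exponent $p^t$ and the alternating bilinear commutator pairing $G^*\times G^*\to Z$, $(\bar g,\bar h)\mapsto[g,h]$, which is well defined because $G$ has class~2 (cf.\ Remark~\ref{commutators}); note that $Z(G)=Z$ is equivalent to this pairing being non-degenerate. The data $(A,\alpha,w,r)$ of ($*$) simply prescribe the image in $G^*$, the order, the $p^r$-th power, and the commutators of a prospective new element.

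First I would prove the one-point extension lemma: given $G\in\mathcal{N}_2(p^u,p^t/Z)$ and data $A,\alpha,w,r$ satisfying the hypotheses of ($*$), there is an extension $\hat G\in\mathcal{N}_2(p^u,p^t/Z)$ with the same centre $Z$ containing an element $g$ witnessing the conclusion of ($*$). Since $A$ has bounded exponent and is pure in $G^*$, it is a direct summand, say $G^*=A\oplus C$; this lets me extend $\alpha$ to a homomorphism $G^*\to Z$ that is $\alpha$ on $A$ and $0$ on $C$, and then set $\hat G^*=G^*\oplus\langle\bar g\rangle$ with $\bar g$ of order $p^r$, so that $\langle A,\bar g\rangle$ is a direct summand and hence pure. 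One then builds $\hat G$ as the corresponding central extension of $\hat G^*$ by $Z$, declaring $[a,g]=\alpha(a)$, $[c,g]=0$, and $g^{p^r}=w$, and extending the multiplication by the class~2 power and commutator identities.

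The verification that $\hat G$ still lies in $\mathcal{N}_2(p^u,p^t/Z)$ is where the numerical hypotheses are used, and is the main obstacle. The inequality $\exp(\alpha(A))\leq p^r$ makes the prescribed commutators compatible with $\bar g$ having order $p^r$; $p^r\leq p^t$ keeps $\exp(\hat G^*)=p^t$; $p^r|w|\leq p^u$ keeps $\exp(\hat G)=p^u$; and the global constraints $s+t\geq u\geq s\geq t$ (with the extra $u>t$ forced when $p=2$ by the anomalous term $\binom{p}{2}$ in the squaring map) are what guarantee that these local conditions can be met simultaneously while the centre stays exactly $Z$, i.e.\ the extended commutator pairing stays non-degenerate. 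Controlling the centre, the two exponents, and purity all at once is the delicate part; the case $p=2$ is the hardest, which is why the stronger hypothesis is imposed there.

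Granting the lemma, existence follows from a chain construction: after first checking that the class is non-empty for the admissible data, one enumerates all instances of ($*$), realises them one at a time by the lemma, and takes the union of the resulting countable chain, using standard bookkeeping to ensure every instance is eventually handled. For uniqueness I would run a back-and-forth between two countable comprehensive groups $G_1,G_2$ with centre $Z$, using as partial isomorphisms the finite maps that fix $Z$ pointwise and respect the images in $G^*$, their purity type, the commutator pairing, and the relevant $p$-power values; each forth/back step is precisely an application of ($*$), with the purity clause ensuring a pure configuration can be extended coherently. This produces an isomorphism $G_1\cong G_2$. Finally, since $Z$ is a countable abelian group of bounded exponent it is itself $\omega$-categorical, so $\Aut(Z)$ has finitely many orbits on each $Z^n$; the same back-and-forth system shows that the $\Aut(G)$-orbit of a tuple is determined by the finite configuration it generates over $Z$---orders, images and purity type in $G^*$, pairwise commutators, and $p$-power values in $Z$---and there are only finitely many such configurations of each size up to $\Aut(Z)$. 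By the Ryll--Nardzewski theorem, $G$ is $\omega$-categorical.
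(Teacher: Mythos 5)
First, a point of comparison: the paper does not prove this statement at all --- Theorem~\ref{comp} is quoted from Apps \cite[Theorem C]{apps1} --- so there is no internal proof to measure yours against. Your overall strategy (a one-point extension lemma realising instances of ($*$), a countable chain for existence, back-and-forth over $Z$ for uniqueness, Ryll--Nardzewski for $\omega$-categoricity) is indeed the method used in \cite{apps1}, and your identification of where the numerical hypotheses enter --- in particular $u>t$ for $p=2$ coming from the binomial term in $(gx)^{2^u}$ --- is accurate.

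The proposal nevertheless has a genuine gap at exactly the step you label ``the delicate part'': the one-point extension lemma is asserted rather than proved, and the naive construction you describe does not in general stay inside $\mathcal{N}_2(p^u,p^t/Z)$. Writing $G^*=A\oplus C$ and extending $\alpha$ by $0$ on $C$, the new element $g$ pairs trivially with $C$ and with itself; if $\alpha=0$ (which ($*$) permits, since then $\exp(\alpha(A))=1\leq p^r$), or more generally whenever some coset $xg^iZ$ with $g^i\notin Z$ pairs trivially with all of $\hat G^*$, the centre of $\hat G$ strictly contains $Z$, so $\hat G$ is not in the class and the chain construction cannot proceed as stated. One must either adjoin a further element pairing non-trivially with $g$ (checking this does not disturb the prescribed data), or run the chain in the larger class of class-2 groups with $G'\leq Z\leq Z(G)$, $\exp(G)\leq p^u$ and $\exp(G/Z)\leq p^t$, verifying only in the limit that the centre is exactly $Z$ because each offending element eventually acquires a non-trivial commutator. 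Two further points are left unaddressed: the non-emptiness of $\mathcal{N}_2(p^u,p^t/Z)$ for the admissible parameters (which is where $s+t\geq u\geq s\geq t$ is genuinely needed), and, for Ryll--Nardzewski, the fact that arbitrary automorphisms of $Z$ --- not just the identity --- extend to $G$, which requires running the uniqueness back-and-forth over an arbitrary automorphism of $Z$ rather than over the identity map.
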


It can be checked that for $p$ an odd prime, $s=t=u=1$, and $Z$ an elementary abelian $p$-group of rank $n$, the corresponding comprehensive group is exactly Baudisch's group $D(n)$. 

\begin{theorem}\label{comp_not_psf} Let $Z,p, s,t,u$ and the comprehensive group $G$ be as in Theorem~\ref{comp}.  Assume $Z$ contains an elementary abelian $p$-group of rank 2. Then $G$ is not pseudofinite.
\end{theorem}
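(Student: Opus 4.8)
The plan is to reduce Theorem~\ref{comp_not_psf} to the bilinear counting argument of Lemma~\ref{bil}, exactly as was done for the Saracino-Wood groups in Corollary~\ref{bil-to-group}. The essential point is that a comprehensive group $G$ with $s=t=u=1$ is already covered, so the work lies in extracting, from a general comprehensive group whose centre contains an elementary abelian subgroup of rank $2$, a finite obstruction of the same flavour. First I would set up the bilinear structure: put $W$ to be an elementary abelian subgroup of $Z$ of rank~$2$, say $W=\langle w_1,w_2\rangle$ with each $w_i$ of order $p$, and let $V$ be a suitable $\mathbb{F}_p$-subquotient of $G^*=G/Z$ on which the commutator map lands in $W$. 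Concretely, since $[g,h]\in Z$ always and commutators have order dividing $p$ along appropriate directions (by Remark~\ref{commutators}, the exponent of $\gamma_2 G$ divides that of $G^* $), I would identify elements $g,h\in G$ whose commutators can be made to hit $w_1$ and $w_2$, and restrict attention to the $\mathbb{F}_p$-span of their images.

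The key step is to verify that the comprehensiveness axiom~($*$) forces an instance of the sentence $\psi$ of Lemma~\ref{bil} on this extracted structure $(V,W,\beta)$, where $\beta$ is the induced commutator pairing into $W$. Given linearly independent $v_1,v_2\in V$ (lifting to $g_1,g_2\in G$ with $g_1^*,g_2^*$ generating a rank-$2$ pure subgroup) and arbitrary targets $w_1',w_2'\in W$, I would apply~($*$) with $A=\langle g_1^*,g_2^*\rangle$ and $\alpha\in\Hom(A,Z)$ the homomorphism sending $g_i^*\mapsto w_i'$, choosing $r$ minimal (here $r=1$ since we work with the rank-$2$ elementary abelian piece and $\exp(\alpha(A))\le p$). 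The axiom then produces $g\in G$ with $[g_i^*,g^*]=\alpha(g_i^*)=w_i'$, which is precisely the witness demanded by $\psi$. I must check that $A$ is pure in $G^*$ and that the exponent and order side-conditions ($\exp(\alpha(A))\le p^r\le\exp(G^*)$ and $p^r|w|\le\exp(G)$) are satisfiable — these hold because $s+t\ge u\ge s\ge t\ge 1$ guarantees $\exp(G^*)=p^t\ge p$ and $\exp(G)=p^u\ge p$, leaving room to take $w=1$ and $r=1$.

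The main obstacle I anticipate is bookkeeping around purity and the passage to subquotients: the axiom~($*$) speaks about pure subgroups of $G^*$ and homomorphisms into the full centre $Z$, whereas Lemma~\ref{bil} needs a clean bilinear form into a rank-$2$ $\mathbb{F}_p$-space. I would handle this by composing $\alpha$ with a retraction or projection of $Z$ onto $W$ and by noting that a rank-$2$ elementary abelian subgroup generated by height-appropriate elements of $G^*$ can be taken pure (or by enlarging to a pure subgroup and restricting the pairing), so that every hypothesis of~($*$) is met with $r=1$ and $w=1$. Once $(V,W,\beta)\models\chi\wedge\psi$ with $\dim V\ge 2$ and $\dim W=2$, any finite model of $\mathrm{Th}(G)$ would interpret a finite model of $\chi\wedge\psi$, contradicting Lemma~\ref{bil}; hence $G$ is not pseudofinite. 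The remaining routine verification is that all of this data — the rank-$2$ subgroup $W$, the independence of $v_1,v_2$, and the instance of $\psi$ — is expressible by a single first-order sentence of $\Th(G)$, which it is, since $W$ is a finite configuration nameable by constants and the relevant quantification is over the group.
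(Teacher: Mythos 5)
Your overall strategy---using the comprehensiveness axiom ($*$) with $r=1$ and $w=1$ to verify the hypothesis $\psi$ of Lemma~\ref{bil} and thereby contradict pseudofiniteness---is exactly the paper's strategy, and your check of the side-conditions of ($*$) is correct. But the way you set up the bilinear structure has a genuine gap. You want a pairing $\beta\colon V\times V\to W$ with $W$ a \emph{fixed rank-$2$} subgroup of $Z$, and you propose either to find a definable $V$ ``on which the commutator map lands in $W$'' or to compose with ``a retraction or projection of $Z$ onto $W$''. Neither works. Axiom ($*$) itself (applied to $A=\langle g^*\rangle$, which is pure when $g^*$ has order $p$ and height $0$) forces $[g,G]$ to contain the whole socle $Z_0=\{z\in Z:z^p=1\}$; so when $Z_0$ has rank $>2$, no definable $V$ of dimension $\geq 2$ has its commutators confined to $W$. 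And a projection of $Z_0$ onto $W$ is not definable in the group language, so the projected structure $(V,W,\beta)$ is not interpretable in $G$ and pseudofiniteness does not transfer to it; in particular your closing claim that any finite model of $\Th(G)$ interprets a finite model of $\chi\wedge\psi$ does not go through, since without the projection the extracted structure fails $\chi$ (the pairing does not map into $W$). A secondary gap is purity: ``enlarging $A=\langle g_1^*,g_2^*\rangle$ to a pure subgroup'' changes $A$ and requires extending $\alpha$ to the larger subgroup, which you do not justify.

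Both problems vanish once you notice that Lemma~\ref{bil} only needs $\dim W\geq 2$, not $\dim W=2$. The paper takes $W=Z_0$, the full socle of $Z$ ($\emptyset$-definable, of rank $\geq 2$ by hypothesis), and $V=G_0^*$, the subgroup of $G^*$ consisting of the identity and the elements of height $0$ and order $p$; this is characteristic, hence $\emptyset$-definable, and infinite by \cite[Lemma 3.2(a)]{apps1}. Then the commutator pairing on the preimage $G_0$ lands in $Z_0$ automatically, since $[x,y]^p=[x^p,y]=1$ by Remark~\ref{commutators}; every finite subgroup of $G_0^*$ is pure in $G^*$ because all of its non-trivial elements already have height $0$, so ($*$) applies directly with $A=\langle g_1^*,g_2^*\rangle$, $r=1$, $w=1$; and $(G_0^*,Z_0,\beta)$ is a definable structure, hence $\omega$-categorical and pseudofinite, satisfying $\chi\wedge\psi$---contradicting Lemma~\ref{bil}. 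With this replacement of your $(V,W)$ by $(G_0^*,Z_0)$, your argument becomes the paper's proof.
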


\begin{proof} Suppose for a contradiction that $G$ is pseudofinite.
Let $G_0^*$ be the subgroup of $G^*$ consisting of elements of height 0  and order  $p$, together with the identity. By \cite[Lemma 3.2(a)]{apps1}, the group $G_0^*$ is infinite.  Let $G_0$ be the preimage of $G_0^*$ under the map $G \to G^*$. Clearly $G_0$ is characteristic in $G$, so is $\emptyset$-definable and so is $\omega$-categorical and also pseudofinite. Let $Z_0$ be the subgroup of $Z$ consisting of elements of order at most $p$. Then $Z_0$ is an elementary abelian $p$-group of rank at least 2. 
Since $[x,y]^p=[x^p,y]$ for any $x,y\in G_0$ by Remark~\ref{commutators}, the commutator map $G_0 \times G_0 \to Z$  induces a bilinear map $\beta:G_0^* \times G_0^* \to Z_0$, and the structure $(G_0^*, Z_0, \beta)$ is definable, and so $\omega$-categorical and pseudofinite. Furthermore, it follows from ($*$) that for any linearly independent $g_1,g_2\in G_0^*$ and $h_1,h_2\in Z_0$, there is $k\in G_0^*$ with $\beta(g_i,k)=h_i$ for $i=1,2$. This however is impossible by Lemma~\ref{bil}.


\end{proof}

\begin{remark} \rm
Let $p$ be an odd prime and $m=p^u$ for some $u\geq 1$. 
It is shown at the end of Section 3 of \cite{apps1} that any countable group satisfying the model companion of the theory of class~2 groups of exponent $m$ is a comprehensive group with respect to $\mathcal{N}_2(p^u,p^u/Z)$,
where $Z=(C_{p^u})^\omega$. Since such $Z$ contains an infinite elementary abelian $p$-group, the model companion is not pseudofinite. 
\end{remark}

As mentioned in the introduction, for any odd prime $p$ and $c<p$, Maier \cite{maier} constructs an $\omega$-categorical model companion of the class of exponent $p$ groups of nilpotency class at most $c$. The construction is described in detail in \cite{DMRS}, via an intricate amalgamation of `Lazard' Lie algebras which yields additional information on the model theory of the groups. The countable model of the model companion is denoted $\mathbf{G}_{c,p}$. It turns out (personal communication of Christian d'Elb\'ee) that if $c\geq 4$ then
 $\mathbf{G}_{c,p}$ interprets $\mathbf{G}_{2,p}$, which is the countable model of $T_{\nil2,p}$; the argument provided by d'Elb\'ee does not handle the case $c=3$.   In particular, by Corollary~\ref{nil-pseud}, we have

\begin{proposition}
Let $p$ be an odd prime and $3<c<p$. Then $\mathbf{G}_{c,p}$ is not pseudofinite.
\end{proposition}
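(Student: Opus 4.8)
The plan is to leverage the interpretability claim attributed to d'Elb\'ee together with the transfer principle that pseudofiniteness descends through interpretation in $\omega$-categorical structures. First I would record the general fact, needed here, that if $N$ is interpretable (with parameters) in an $\omega$-categorical pseudofinite structure $M$, then $N$ is itself pseudofinite: indeed, interpretability gives a formula-by-formula translation of $\Th(N)$ into $\Th(M)$, and since $M$ is $\omega$-categorical we may name the finitely many parameters of the interpretation by constants without disturbing pseudofiniteness (any sentence over the named constants follows from the $\omega$-categoricity of $M$ being preserved in finite models via a single sentence asserting the correct finite configuration), so each sentence true in $N$ pulls back to a sentence true in $M$, which has a finite model, whose interpreted reduct is a finite model of the original $N$-sentence. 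The contrapositive is what we use: \emph{if $N$ is not pseudofinite and $N$ is interpretable in $M$ with $M$ $\omega$-categorical, then $M$ is not pseudofinite.}

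Second, I would invoke the stated interpretation: for $3<c<p$ (equivalently $c\geq 4$ and $c<p$), the group $\mathbf{G}_{c,p}$ interprets $\mathbf{G}_{2,p}$, which is the countable model of $T_{\nil2,p}$. By Corollary~\ref{nil-pseud}(i), $T_{\nil2,p}$ is not pseudofinite, i.e. $\mathbf{G}_{2,p}$ is not pseudofinite. Third, I would combine these: $\mathbf{G}_{c,p}$ is $\omega$-categorical (being the countable model of an $\omega$-categorical model companion, as recalled in the introduction via Maier's construction and \cite{DMRS}), and it interprets the non-pseudofinite structure $\mathbf{G}_{2,p}$; by the transfer principle above, $\mathbf{G}_{c,p}$ cannot be pseudofinite. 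This gives the proposition.

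The main obstacle, and the only genuinely delicate point, is the justification of the transfer principle in the \emph{parametrised} interpretation case — one must ensure the finitely many parameters defining the interpretation of $\mathbf{G}_{2,p}$ inside $\mathbf{G}_{c,p}$ do not obstruct the descent of finite models. The clean way around this is $\omega$-categoricity of $\mathbf{G}_{c,p}$: naming the parameters by constants keeps the expansion $\omega$-categorical, and for an $\omega$-categorical structure any finite tuple's type is isolated, so a single sentence $\theta$ of $\Th(\mathbf{G}_{c,p})$ asserts that a tuple realising the appropriate parameter-type exists; in a finite model of $\theta\wedge\sigma$ (for $\sigma$ the pullback of an $N$-sentence), one recovers such a tuple and hence a finite interpreted model. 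I would either state this transfer as a short standalone lemma or cite the standard fact that interpretability preserves pseudofiniteness in the $\omega$-categorical setting; everything else is immediate from Corollary~\ref{nil-pseud} and the cited interpretation. I would also note explicitly that the argument says nothing about $c=3$, exactly because the interpretation of $\mathbf{G}_{2,p}$ is only available for $c\geq 4$, which is why the hypothesis reads $3<c<p$ rather than $2<c<p$.
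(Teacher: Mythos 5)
Your proposal is correct and follows essentially the same route as the paper: the paper deduces the proposition directly from d'Elb\'ee's interpretation of $\mathbf{G}_{2,p}$ in $\mathbf{G}_{c,p}$ (for $c\geq 4$) together with Corollary~\ref{nil-pseud}, leaving the transfer of pseudofiniteness through the interpretation implicit. Your explicit justification of that transfer step, including the handling of parameters via isolation of types in the $\omega$-categorical setting, is a sound elaboration of what the paper takes for granted.
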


We have not tried systematically to show that all known groups of this flavour (i.e. $\omega$-categorical model
 companions, Fra\"iss\'e limits in various languages) satisfy Conjecture~\ref{mainq}. In particular, we only work with odd primes, so for example do not consider the QE nilpotent class 2 groups of exponent 4 mentioned in the introduction. 

We finish this section with a brief discussion of which FAF groups are pseudofinite.
\begin{examples}\label{examples} \rm
 First, by \cite[Theorem 63]{bcm}, every $\omega$-categorical abelian-by-finite group has $\omega$-stable theory, and hence is smoothly approximable (see e.g. \cite[Corollary 7.4]{chl}) and so pseudofinite. As noted in Section 1, extraspecial $p$-groups of odd exponent $p$ are pseudofinite, whereas by Theorem~\ref{comp_not_psf} for finite $n\geq 2$ Baudisch's groups $D(n)$ are not; the groups in both of these classes are finite-by-abelian. At the other extreme to the groups $D(n)$, consider (for an odd prime $p$ and any finite $n\geq 1$) a vector space $V$ over $\mathbb{F}_{p^n}$ equipped with a non-degenerate symplectic form $\beta: V\times V \to \mathbb{F}_{p^n}$. Define the group $H(p,n)$ to have universe 
$V\times \mathbb{F}_{p^n}$, with group operation $(v_1,x_1)*(v_2,x_2)=(v_1+v_2, x_1+x_2+\beta(v_1,v_2))$. Then $H(p,n)$ is bi-interpretable with a smoothly approximable structure and so is smoothly approximable and hence pseudofinite (unlike $D(n)$). However, like $D(n)$, $H(p,n)$ is nilpotent class~2 of exponent $p$ with centre elementary abelian of rank $n$, and is supersimple of SU-rank 1. If $n=1$ then $H(p,n)$ is extraspecial (so is isomorphic to $D(1)$). 

The groups $H(p,n)$ are a special case of a more general class of $\omega$-categorical nilpotent class~2 groups discussed by Apps in \cite[Section 2]{apps1}. Namely, given a finite nilpotent class~2 group $G$, and subgroup $K$ of $G$ with $G'\leq K\leq Z(G)$, let $G(\omega;K)$ denote the (restricted) central product of $\aleph_0$ copies of $G$, amalgamated over $K$, and for $n\in \mathbb{N}$ with $n>1$ let $G(n;K)$ be the central product of $n$ copies of $G$ amalgamated over $K$. Apps shows in \cite[Theorem A]{apps1} that $G(\omega;K)$ is $\omega$-categorical.

\end{examples}

\begin{question} Is it true that for any  $G,K$ as above, the group $G(\omega,K)$ is pseudofinite and satisfies the limit theory of the groups $G(n,K)$? Are all such groups smoothly approximated by the $G(n,K)$ (and hence supersimple of finite rank)?
\end{question}

Observe that if $G$ is a finite non-abelian group, then the restricted {\em direct} power $H:=G^\omega$ is not pseudofinite. For define a preorder $x<y$ on $H$ by putting $x<y$ if and only if $C_H(x)>C_H(y)$. Then $H\models (\forall x\in H\setminus Z(H)) (\exists y\in H\setminus Z(H))(x<y)$; indeed, we could choose $y$ to agree with $x$ on its support, and have a non-central element of $G$ in another entry. Such a sentence  could not hold in any finite non-abelian group. In fact, such a group $H$ is also not $\omega$-categorical -- see e.g. \cite[Theorem 3]{ros}.

\begin{question}
Does every $\omega$-categorical FAF group have supersimple theory?
\end{question}

\section{Further observations}

We begin by noting that the Boolean power construction mentioned in the introduction does not seem  to give counterexamples to Conjecture~\ref{mainq}. Note that in the lemma below, if $G$ is nilpotent of class 2 then so are $B[G]$ and $B^-[G]$. 

\begin{lemma}\label{boolpseud}
Let $G$ be an $\omega$-categorical  non-abelian group and let $B[G]$ or $B^-[G]$ be an $\omega$-categorical Boolean power of $G$. Then $B[G]$ (respectively $B^-[G]$) has the strict order property so is not pseudofinite. 
\end{lemma}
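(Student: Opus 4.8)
The goal is to show that an $\omega$-categorical Boolean power $B[G]$ (or $B^-[G]$) of a non-abelian $\omega$-categorical group $G$ has the strict order property, whence by Lemma~\ref{SOP} it is not pseudofinite. The plan is to exploit the Boolean algebra structure directly: elements of $B[G]$ are continuous maps $f\colon S\to G$ on the Stone space $S$ of an infinite $\omega$-categorical Boolean algebra $B$, and the natural pre-order comes from the \emph{support} of such a map, namely the clopen set $\mathrm{supp}(f)=\{x\in S: f(x)\neq 1_G\}$.

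First I would fix a non-central element $g\in G$, which exists since $G$ is non-abelian, and for each clopen set $U\subseteq S$ let $f_U$ denote the map taking value $g$ on $U$ and $1_G$ off $U$; these are genuine elements of $B[G]$ (and of $B^-[G]$, provided the base point $x_0$ lies outside $U$, which is harmless since $B$ is infinite). The key observation is that the inclusion relation $U\subseteq V$ among supports should be recoverable by a first-order formula in the group language. I would aim to define a pre-order on $B[G]$ via centralizers: set $x \preceq y$ iff $C_{B[G]}(y)\subseteq C_{B[G]}(x)$, or some close variant. Because $g$ is non-central, the centralizer of $f_U$ in $B[G]$ is exactly the set of maps whose values commute with $g$ on $U$, so larger support yields smaller centralizer; since $B$ is an infinite Boolean algebra, its Stone space carries an infinite chain of clopen sets $U_1\subsetneq U_2\subsetneq\cdots$, and the corresponding $f_{U_i}$ form an infinite $\preceq$-chain. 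This gives a $0$-definable pre-order on a power of $B[G]$ with an infinite totally ordered subset, which is precisely the strict order property, and then Lemma~\ref{SOP} finishes the argument.

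The main obstacle I anticipate is verifying that the relation on supports is genuinely first-order \emph{uniformly} in the group structure of $B[G]$, rather than merely in the Boolean algebra $B$ that we happen to know is present. Centralizers behave well, but one must check that the centralizer comparison $C(y)\subseteq C(x)$ really tracks support inclusion and is not confounded by the internal structure of $G$ (for instance if $C_G(g)$ is large, elements supported outside $U$ may still centralize $f_U$). A cleaner route, if the direct centralizer definition is awkward, is to pass to the associated Boolean-algebra structure that $B[G]$ interprets: the supports of elements, together with the operations of union, intersection and complement induced by products and the non-abelian commutator, recover $(B,\subseteq)$ as a $0$-interpretable structure, and an infinite Boolean algebra visibly has the strict order property via $\subseteq$. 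I would therefore hedge by establishing that $B[G]$ $0$-interprets the infinite Boolean algebra $B$ (using that $g$ is non-central to detect supports through commutators $[f,f_U]$), and then invoke the fact that the strict order property is inherited under interpretation together with the evident strict order property of an infinite $(B,\subseteq)$. Either way the non-abelianness of $G$ and the infiniteness of $B$ are exactly the two hypotheses that make the construction work.
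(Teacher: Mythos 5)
Your proposal is correct and follows essentially the same route as the paper's proof: fix a non-central $g\in G$, take the maps $\phi_U$ supported on an infinite chain of clopen sets with value $g$, and observe that the $0$-definable preorder given by reverse inclusion of centralizers $C_{B[G]}(y)\subseteq C_{B[G]}(x)$ is strictly decreasing along the chain, yielding the strict order property and hence non-pseudofiniteness via Lemma~\ref{SOP}. The concern you raise about $C_G(g)$ being large is harmless, since for $U\subsetneq V$ one can always find an element supported on a clopen subset of $V\setminus U$ taking a value outside $C_G(g)$, which witnesses the strict containment $C_{B[G]}(\phi_V)<C_{B[G]}(\phi_U)$.
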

\begin{proof}
We consider the case when $B$ is the countable atomless Boolean algebra, but the other cases (where $B$ has finitely many atoms, or is an $\omega$-categorical Boolean ring without 1) are similar. Let $C$ be the Cantor set. Then 
$B[G]$ consists of all continuous maps $C\to G$ with finite support. Let $\{U_i:i\in \omega\}$ be clopen subsets of $C$, with $U_i\subset U_j$ whenever $i<j$. Also let $g\in G\setminus Z(G)$ and let $\phi_i:C\to G$ be the map taking value $g$ on $U_i$ and $1$ elsewhere.  Then for each $i$ we have $\phi_i\in B[G]$, and $C_{B[G]}(\phi_i)$  consists of elements $\psi$ of $B[G]$ taking any value on $C\setminus U_i$ and taking values in $C_G(g)$ on $U_i$. Hence if $i>j$ we have $C_G(\phi_i)<C_G(\phi_j)$. Thus, the formula $\phi(x,y)$ expressing $C_{B[G]}(x)\leq C_{B[G]}(y)$ defines a preorder with an infinite totally ordered subset. Non-pseudofiniteness now follows from Lemma~\ref{SOP}.
\end{proof}

We also pose the following question, aiming to recover a Boolean power structure for a certain class of nilpotent class~2 groups, mimicking the proof of  Theorem A of \cite{apps2}  which stems ultimately from an unpublished result of Philip Hall.
\begin{problem}
Find conditions on a locally finite nilpotent class~2 (perhaps $\omega$-categorical)  group $G$ which guarantee that $G$ is a Boolean power. Suggested conditions are
 (i) there is no non-trivial characteristic subgroup of $G$ which is properly contained in the derived subgroup $G'$, and (ii) $G$ has a subgroup of finite index not containing $G'$. Find conditions which ensure that $G$ is a filtered Boolean power in the sense of \cite{mr}.
\end{problem}

Let $f(n)$ be the number of non-isomorphic groups of order $n$. It is well-known that $f(n)$ grows fast for prime power $n$, in the sense that for prime $p$ we have $f(p^m)\geq p^{\frac{2}{27}m^2(m-6)}$ (see \cite{higman0}). Given this ubiquity of finite $p$-groups, we expect there to be many $\omega$-categorical class~2 $p$-groups not of the types described above. Note, though, that the  very flexible `Mekler construction' (see \cite{mekler}), though it gives examples of pseudofinite groups (see \cite{mac-tent}), does not preserve $\omega$-categoricity so is not likely to give new examples. 

The deep theory in \cite{cher-hrush} yields the following consequence. 
\begin{proposition} Suppose that $d\in \mathbb{N}$ and $\mathcal{C}$ is a family of finite groups such that, for all $G\in \mathcal{C}$, the group $\Aut(G)$ has at most $d$ orbits on $G^4$. Then  the following hold.

\begin{enumerate}
\item[(i)] There is $e\in \mathbb{N}$ such that each $G\in \mathcal{C}$ has a normal subgroup $N$ of index at most $e$, with $N'\leq Z(N)$ and $|N'|\leq e$.
\item[(ii)] Any ultraproduct of members of $\mathcal{C}$ is FAF with $\omega$-categorical theory.
\end{enumerate}
\end{proposition}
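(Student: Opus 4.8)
The plan is to establish (ii) first, treating the Cherlin--Hrushovski theory as a black box, and then to deduce the uniform statement (i) from (ii) by a soft ultraproduct argument. So fix an ultraproduct $G^{*}=\prod_{\mathcal U}G_i$ of members of $\mathcal C$. The key input I would quote from \cite{cher-hrush} is that a uniform bound $d$ on the number of $\Aut(G_i)$-orbits on $G_i^{4}$ forces, for every $n$, a uniform bound $c_n=c_n(d)$ on the number of orbits on $G_i^{n}$, and that the infinite ultraproducts of such a family are smoothly approximable. First I would check that this yields $\omega$-categoricity of $G^{*}$: if $G^{*}$ realised more than $c_n$ complete $n$-types, I would separate $c_n+1$ of them by pairwise contradictory formulas, assert their simultaneous realisability by a single sentence, and pull this back by \L o\'s's theorem to conclude that $\mathcal U$-almost all $G_i$ carry $c_n+1$ tuples in pairwise distinct automorphism orbits, contradicting the bound $c_n$. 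Hence each $n$-type count is finite and $G^{*}$ is $\omega$-categorical by Ryll--Nardzewski. Since $G^{*}$ is smoothly approximable it is supersimple, so by \cite{ev-wagner} (or directly by \cite[Proposition 6.2.4]{cher-hrush}, whose hypotheses of type amalgamation and modularity hold for smoothly approximable structures) the group $G^{*}$ is FAF. This proves (ii).

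For (i) I would argue by contradiction using (ii). If (i) failed, then for each $n$ there would be $G_n\in\mathcal C$ with no normal subgroup of index at most $n$ whose derived subgroup is central in it and of order at most $n$. Form $G^{*}=\prod_{\mathcal U}G_n$ over a nonprincipal ultrafilter on $\mathbb N$. By (ii), $G^{*}$ is FAF, so it has a characteristic (hence $\emptyset$-definable, by $\omega$-categoricity and the remarks after Conjecture~\ref{mainq}) normal subgroup $N^{*}$ of finite index with $(N^{*})'$ finite. To obtain a class $2$ witness I would pass to $C^{*}:=C_{N^{*}}((N^{*})')$; since $(N^{*})'$ is finite, $N^{*}/C^{*}$ embeds in $\Aut((N^{*})')$, so $C^{*}$ is a characteristic normal subgroup of finite index, and $(C^{*})'\leq (N^{*})'\leq Z(C^{*})$ with $(C^{*})'$ finite. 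Let $m=[G^{*}:C^{*}]$, $k=|(C^{*})'|$, and let $\nu(x)$ define $C^{*}$. The sentence asserting that $\nu$ defines a normal subgroup of index $m$ whose derived subgroup is central in it and has order $k$ holds in $G^{*}$, hence by \L o\'s in $G_n$ for $\mathcal U$-almost all $n$; as the set $\{n:n>\max(m,k)\}$ is cofinite, I can pick $n$ in both sets. For such $n$, $G_n$ has a normal subgroup of index $\leq n$, class at most $2$, with derived subgroup of order $\leq n$, contradicting the choice of $G_n$. This contradiction establishes (i), i.e.\ some finite $e$ works uniformly over all of $\mathcal C$.

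The hard part is entirely the invocation of \cite{cher-hrush}: the assertion that a bound on $4$-orbits alone propagates to a bound on $n$-orbits for every $n$, and that the ultraproducts are smoothly approximable, is precisely the deep content of the Lie-coordinatisation programme, and it is what makes the single arity $4$ (rather than an unbounded family of arities) sufficient in the hypothesis. Everything else is soft and I expect it to go through routinely: $\omega$-categoricity via Ryll--Nardzewski and \L o\'s, the FAF conclusion via the already-cited structural theorem for $\omega$-categorical supersimple groups, the class-$2$ upgrade via the finiteness of $\Aut((N^{*})')$, and the passage from a single ultraproduct to the uniform bound $e$ via the compactness-style contradiction above.
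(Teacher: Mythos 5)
Your argument is correct, and it leans on exactly the same deep input as the paper --- the Cherlin--Hrushovski theory, which converts the single bound on $\Aut(G)$-orbits on $G^4$ into Lie coordinatizability/smooth approximability and hence (via \cite[Proposition 6.2.4]{cher-hrush} or \cite{ev-wagner}) into the FAF conclusion --- but you organise the deduction in the opposite order. The paper quotes the ``finitely many families, each with a limit theory'' form of the structure theory (Theorem 6 of \cite{cher-hrush}): the members of $\mathcal{C}$ split into finitely many subfamilies, each converging to an $\omega$-categorical Lie coordinatizable group which is FAF by Proposition 6.2.4, and the uniform bound $e$ in (i) then falls out by transferring a single sentence to all but finitely many members of each subfamily; (ii) is then immediate. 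You instead prove (ii) first (smooth approximability of the ultraproduct, $\omega$-categoricity via \L o\'s and Ryll--Nardzewski, FAF via supersimplicity) and recover (i) by a compactness-style contradiction with carefully chosen witnesses $G_n$. The two routes buy essentially the same thing; yours is slightly longer but has the merit of making explicit two points the paper glosses over: that the clause $N'\leq Z(N)$ in (i) is not part of the bare FAF definition and must be secured by passing to $C_{N^*}((N^*)')$, and that expressing ``the derived subgroup has order $k$'' in first-order logic uses the fact that a finite derived subgroup is the set of products of a bounded number of commutators (a point you should state when writing the \L o\'s transfer in full). One further small remark: in your proof of (i) you should note that the witnesses $G_n$ to the failure of (i) necessarily satisfy $|G_n|>n$ (else the trivial subgroup works), so the ultraproduct is indeed infinite and (ii) applies.
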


\begin{proof}
\begin{enumerate}
\item[(i)] It follows from the structure theory developed in \cite{cher-hrush} (see e.g. Theorem 6) that the (sufficiently large) members of $\mathcal{C}$ fall into finitely many families $\mathcal{C}_i$ each associated with an $\omega$-categorical countably infinite Lie coordinatizable group $G_i$ whose theory is the collection of sentences which hold in all but finitely many groups in $\mathcal{C}_i$. In particular, by \cite[Theorem 7]{cher-hrush} such a group  is modular and of finite rank  (the rank used in \cite{cher-hrush} can be taken after the fact to be SU-rank). By \cite[Proposition 6.2.4]{cher-hrush} the group $G_i$ is FAF, and the result follows.

\item[(ii)] This is immediate from (i).
\end{enumerate}
\end{proof}
Thus, for any construction of an $\omega$-categorical pseudofinite not virtually  finite-by-abelian group, the $\omega$-categoricity will not arise directly from richness of the automorphism groups of a class of finite groups. Such a construction  will have to involve some model-theoretic argument to obtain $\omega$-categoricity (e.g. back-and-forth, Fra\"iss\'e amalgamation, or axiomatising an appropriate class). We cannot envisage such a construction which eludes the argument against pseudofiniteness given in Section 2. 

Finally, we remark that the structure theory for $\omega$-categorical rings (not necessarily commutative) has close parallels to that of $\omega$-categorical groups. Thus, we have the following conjecture and proposition. Recall that a ring $R$ is {\em  nilpotent} (of class $r$) if $x_1\ldots x_r=0$ for any $x_1,\ldots,x_r\in R$. We say $R$ is {\em null} if $xy=0$ for any $x,y\in R$. 

\begin{conjecture} \label{qrings} If $R$ be an $\omega$-categorical pseudofinite ring, then $R$ has a definable 2-sided ideal $I$ of finite index which is finite-by-null, i.e. $I$ has a finite  2-sided ideal $J$ such that $I/J$ is null.
\end{conjecture}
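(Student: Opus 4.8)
The plan is to prove Conjecture~\ref{qrings} in parallel with the group case, ultimately reducing it to the class~2 case of Conjecture~\ref{mainq} via a correspondence between nilpotent rings and bilinear maps; indeed Lemma~\ref{bil} was stated for arbitrary (not necessarily alternating) bilinear forms precisely so that it applies to rings. First I would observe that if $R$ is an $\omega$-categorical pseudofinite ring then, by Lemma~\ref{SOP}, $R$ has no strict order property, so by the ring-theoretic analogue of \cite{mac1} for $\omega$-categorical rings without the strict order property, $R$ has a definable nilpotent two-sided ideal of finite index. Replacing $R$ by this ideal and decomposing the (abelian) additive group into its primary components, I may assume $R$ is a nilpotent $p$-ring. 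As in the proof of Corollary~\ref{cor_nilp}, reducing by characteristic and hence $\emptyset$-definable ideals, I may assume $R$ has no proper characteristic ideal of finite index and no nontrivial finite characteristic ideal, and aim to show $R$ is null.

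Next I would establish the ring analogue of Lemma~\ref{laurent}: if $R$ is a nilpotent $p$-ring of class~$3$ (so $R^4=0$) with $|R^2/R^3|=p^m$ finite, then $R^3$ is finite. Here associativity replaces the Hall--Witt identity and makes the argument cleaner. Choosing finitely many elements whose products generate $R^2$ modulo $R^3$, one lets $H$ be the subring they generate and shows, using $R\cdot R^3\subseteq R^4=0$ together with $R^2=H^2+R^3$, that $R^3=H^3$. Since $H$ is finitely generated, $H^3$ is generated as an abelian group by finitely many triple products, and its additive exponent is bounded in terms of $m$ by the identity $n(xy)=(nx)y$ of Remark~\ref{commutators}. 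Thus $R^3$ is finite, which, exactly as in Corollary~\ref{cor_nilp}, lets me run the induction on nilpotency class and reduce to the case where $R$ has class~$2$, that is, $R^3=0$.

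For a class~2 $p$-ring $R$, the multiplication factors through a bilinear map $\mu\colon V\times V\to W$, where $V=R/R^2$ and $W=R^2$, since $R\cdot R^2=R^2\cdot R=0$. The condition ``finite-by-null-by-finite'' for $R$ becomes the assertion that the image of $\mu$ is finite (after which $R^2$, being a finite characteristic ideal, is forced to be trivial by the reduction above). I would then argue, exactly as in Corollary~\ref{bil-to-group}, that if $R$ is a counterexample then $(V,W,\mu)$ is a definable, hence $\omega$-categorical and pseudofinite, bilinear structure with $\dim V\ge 2$ and $\dim W\ge 2$ satisfying (on one side) the richness sentence $\psi$, contradicting Lemma~\ref{bil}. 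This disposes of the ``rich'' rings and establishes that the class~2 case of Conjecture~\ref{qrings} is equivalent to the class~2 case of Conjecture~\ref{mainq}: a class~2 $p$-group corresponds to an alternating bilinear form and a class~2 $p$-ring to a general one, while a general bilinear form $\mu$ can be converted into an alternating form $\tilde\mu$ on $V\oplus V$, via $\tilde\mu((a,b),(c,d))=\mu(a,d)-\mu(c,b)$, with the same image, transferring each conjecture to the other.

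The main obstacle is twofold. First, one needs the correct structure theorem for $\omega$-categorical rings without the strict order property to play the role of \cite{mac1}; this should follow from the existing literature on $\omega$-categorical rings, but requires the two-sided ideal version, and one must take care throughout the class~2 reduction to keep all subspaces and ideals two-sided and $\emptyset$-definable so that the finite-by-null conclusion descends correctly. Second, and more seriously, just as for groups the general conjecture remains open, because one cannot show that an arbitrary $\omega$-categorical pseudofinite class~2 ring satisfies the richness hypothesis $\psi$; the reduction above proves the equivalence of the two conjectures and handles all the rich examples, but a full proof would require either verifying $\psi$ in general or, as suggested in the remark following Corollary~\ref{nil-pseud}, a finer counting argument tolerating sparse failures of richness.
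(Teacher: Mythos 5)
The statement you were asked to prove is posed in the paper as Conjecture~\ref{qrings}; the paper contains no proof of it. The only results the paper actually establishes in its vicinity are the final Proposition (every $\omega$-categorical pseudofinite ring has a definable nilpotent two-sided ideal of finite index, obtained from Lemma~\ref{SOP} together with \cite[Theorem 3.1]{krup} --- which is exactly your first step) and the quoted fact from \cite{krupwag} that supersimple $\omega$-categorical rings are finite-by-null-by-finite. The authors also assert, without proof, that Conjecture~\ref{qrings} is equivalent to Conjecture~\ref{mainq}. Your proposal is candid that it does not close the conjecture, and the point at which it stops --- one cannot verify the richness sentence $\psi$ of Lemma~\ref{bil} for an arbitrary $\omega$-categorical pseudofinite class~2 ring --- is precisely the open heart of both conjectures. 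So there is no proof here, but nor could there be one matching ``the paper's proof''.

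As a programme, what you outline is sensible and closely parallels Section~2. The reduction to nilpotent $p$-rings of class at most~2 mirrors Corollary~\ref{cor_nilp}, and your ring analogue of Lemma~\ref{laurent} is correct and indeed cleaner: with $H$ the subring generated by finitely many elements whose pairwise products generate $R^2$ modulo $R^3$, associativity gives $x(h_1h_2)=(xh_1)h_2\in (H^2+R^3)h_2\subseteq H^3$ (using $R^4=0$), so $R^3=H^3$ is additively generated by boundedly many triple products of bounded additive order. Likewise, Lemma~\ref{bil} is stated for general (not necessarily alternating) bilinear maps, so it does rule out pseudofiniteness for class~2 rings satisfying the analogue of $\sigma_1\wedge\sigma_2$, in parallel with Corollary~\ref{nil-pseud}. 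The weakest link is the claimed equivalence of the two conjectures: the alternating form $\tilde\mu$ on $V\oplus V$ is a reasonable device for one direction, but you would still need to check that the associated group and ring are mutually interpretable (so that $\omega$-categoricity and pseudofiniteness transfer both ways), that finite-by-null-by-finite for the ring corresponds exactly to FAF for the group after the reductions on each side, and that the correspondence between class~2 groups and alternating forms survives at $p=2$. None of this is carried out in the paper, and even granting it, the conjecture itself remains open exactly where you say it does.
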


We remark that  every supersimple $\omega$-categorical ring is in this sense finite-by-null-by-finite (see \cite[Theorem 3.4]{krupwag}, and its proof for the definability assertion).   

\begin{proposition}
Let $R$ be an $\omega$-categorical pseudofinite ring. Then $R$ has a definable nilpotent ideal of finite index.
\end{proposition}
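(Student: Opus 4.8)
The plan is to mirror the proof of Proposition~\ref{nilpot}: first reduce to the absence of the strict order property, then run a ring-theoretic structure argument. By Lemma~\ref{SOP}, since $R$ is $\omega$-categorical and pseudofinite it does not have the strict order property, so it suffices to prove the ring analogue of \cite[Theorem 1.2]{mac1}: every $\omega$-categorical ring without the strict order property has a $\emptyset$-definable nilpotent ideal of finite index. Two structural observations drive the argument. First, $(R,+)$ is an $\omega$-categorical abelian group, hence of finite exponent, so $R$ is an algebra over $\mathbb{Z}/m\mathbb{Z}$ for some $m$. Second, by $\omega$-categoricity $\Aut(R)$ has finitely many orbits on $R$, so there are only finitely many $\Aut(R)$-invariant (equivalently, $\emptyset$-definable) subsets of $R$; in particular the Jacobson radical $J=J(R)$, each power $R^k$, and each power $J^k$ is a characteristic, hence $\emptyset$-definable, ideal, and every descending chain of such ideals stabilises.

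I would take the desired ideal to be $J$ and argue in two steps. \emph{Step 1: $R/J$ is finite.} The quotient $R/J$ is interpretable in $R$, hence is itself $\omega$-categorical, pseudofinite, and without the strict order property, and it is Jacobson-semisimple of finite exponent. If it were infinite then, using that an $\omega$-categorical division ring of finite exponent is finite together with the decomposition of a semisimple ring into primitive pieces, $R/J$ could not be Artinian and would therefore carry an infinite family of orthogonal idempotents, and hence an infinite chain in the Boolean algebra of idempotents. The relation $e\preceq f \Leftrightarrow ef=fe=e$ then defines a preorder on the $\emptyset$-definable set $\{x:x^2=x\}$ with an infinite totally ordered subset, contradicting the absence of the strict order property. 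Hence $R/J$ is finite and $J$ has finite index. \emph{Step 2: $J$ is nilpotent.} The descending chain $J\supseteq J^2\supseteq\cdots$ consists of $\emptyset$-definable ideals, so it stabilises: $K:=J^m=J^{m+1}$ for some $m$. Then $K=JK=K^2$, so $K$ is an idempotent ideal contained in $J$, and the goal is to show $K=0$, which gives $J^m=0$.

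The main obstacle is precisely Step~2: ruling out a nonzero idempotent ideal inside the radical. This cannot follow from $\omega$-categoricity alone (it is the analogue of the genuinely hard content of \cite[Theorem 1.2]{mac1}), and it is here that pseudofiniteness must enter. In every finite ring the radical is nilpotent, so no finite ring contains a nonzero idempotent ideal inside its radical; the plan is to combine this with the definability of $K$ and $J$ and a transfer argument in the spirit of Lemma~\ref{bil} and Corollary~\ref{SOPgroup}, showing that $K\neq 0$ would force a configuration --- an infinite strictly descending or strictly ascending $\emptyset$-definable chain, or a definable injective but non-surjective self-map of $K$ --- that either contradicts $\omega$-categoricity or yields the strict order property and hence (by Lemma~\ref{SOP}) non-pseudofiniteness. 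Once $K=0$, the ideal $I=J$ is $\emptyset$-definable, nilpotent of class at most $m$, and of finite index, as required. I would also note the alternative of transcribing the proof of \cite[Theorem 1.2]{mac1} directly to the ring setting, which the close parallels between $\omega$-categorical groups and rings make routine modulo the radical-nilpotency obstacle above.
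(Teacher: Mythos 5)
Your opening move is exactly the paper's: by Lemma~\ref{SOP} an $\omega$-categorical pseudofinite ring does not have the strict order property, so everything reduces to the statement that an $\omega$-categorical ring without the strict order property has a definable nilpotent ideal of finite index. The paper does not prove that statement; it cites it as \cite[Theorem 3.1]{krup} (Krupinski's ring-theoretic analogue of \cite[Theorem 1.2]{mac1}), noting only that the definability clause can be extracted from Krupinski's proof together with Corollary~\ref{SOPgroup}. So your reduction is correct and identical in spirit, but where the paper invokes a known theorem you set out to reprove it, and that is where the argument stops being a proof.

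Concretely, your Step 2 is a gap, and you say so yourself: having found the idempotent ideal $K=J^m=J^{m+1}=JK$ inside the radical, you need $K=0$, and Nakayama is unavailable because $K$ need not be finitely generated as a module. Your stated plan --- that $K\neq 0$ should ``force a configuration'' contradicting $\omega$-categoricity or yielding the strict order property --- is a declaration of intent, not an argument; no such configuration is exhibited, and it is far from obvious how to produce one (this is, as you note, the genuinely hard content of \cite{mac1} and of \cite[Theorem 3.1]{krup}). Step 1 also deserves a little more care (the passage from ``$R/J$ Jacobson-semisimple, non-Artinian, of finite exponent'' to an infinite orthogonal family of idempotents needs an argument in the absence of chain conditions), but that is repairable; the unproved nilpotency of the radical is the essential missing piece. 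The fix is simply to cite \cite[Theorem 3.1]{krup} for the NSOP structure theorem, as the paper does, rather than attempting to rederive it.
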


\begin{proof} By \cite[Theorem 3.1]{krup}, any $\omega$-categorical  ring whose theory does not have the strict order property  has a definable nilpotent ideal of finite index (the definability clause comes from the proof in \cite{krup}, using the corresponding definability in Corollary~\ref{SOPgroup}). The result now follows from Lemma~\ref{SOP}.

\end{proof}



\begin{thebibliography}{999}
\bibitem{apps0} A.B. Apps, `Boolean powers of groups', Math. Proc. Cam. Phil. Soc. 91 (1982), 375--395.
\bibitem{apps1} A.B. Apps,  `On $\aleph _{0}$-categorical class two groups',  J. Algebra 82 (1983), no. 2, 516–538.
\bibitem{apps2} A.B. Apps,  `On the structure of $\aleph _{0}$-categorical groups', J. Algebra 81 (1983), no. 2, 320–339.
\bibitem{baudisch1} A. Baudisch, `Closures in $\aleph_0$-categorical bilinear maps', J. Symbolic Logic 65 (2000), no. 2, 914–922.
\bibitem{baudisch2} A. Baudisch, `A Fra\"iss\'e limit of nilpotent groups of finite exponent',  Bull. London Math. Soc. 33 (2001), no. 5, 513–519.
\bibitem{baudisch3} A. Baudisch, `More Fra\"iss\'e limits of nilpotent groups of finite exponent', Bull. London Math. Soc. 36 (2004), no. 5, 613–622.
\bibitem{baudisch4} A. Baudisch, `Free amalgamation and automorphism groups', J. Symb. Logic 81 (2016), no. 3, 936--947. 
\bibitem{baudisch-neo} A. Baudisch, `Neostability-properties of Fra\"iss\'e limits of 2-nilpotent groups of exponent $p>2$', Arch. Math. Logic 55 (2016), no. 3-4, 397–403.
\bibitem{bcm} W. Baur, G. Cherlin, A.J. Macintyre, `Totally categorical groups and rings', J. Alg. 57 (1979), 407--440.
\bibitem{cherlin1}  G. Cherlin, `Combinatorial problems connected with finite homogeneity', Contemp. Math. 131 (Part 3), {\em Amer. Math. Soc., Providence, RI}, 1992,  3-30. 
\bibitem{cherlin2} G. Cherlin, `Two problems on homogeneous structures, revisited', Contemp. Math. 558, {\em Amer. Math. Soc., Providence, RI}, 2011, 319--415. 
\bibitem{cher-felg} G. Cherlin, U. Felgner, `Quantifier eliminable groups',  Logic Colloquium '80 (Prague, 1980), pp. 69–81, Studies in Logic and the Foundations of Mathematics, 108, North-Holland, Amsterdam-New York, 1982. 
\bibitem{chl} G. Cherlin, L. Harrington, A.H. Lachlan, `$\aleph_0$-categorical, $\aleph_0$-stable structures', Ann. Pure Appl. Logic 28 (2) (1985), 103--135.
\bibitem{cher-hrush} G. Cherlin, E. Hrushovski, {\em Finite structures with few types}, Annals of Math. Studes no. 152, Princeton University Press, Princeton, 2003. 
\bibitem{cher-sar-wood} G. Cherlin, D. Saracino, C. Wood, `On homogeneous nilpotent groups and rings', Proc. Amer. Math. Soc. 119 (1993), no. 4, 1289–1306.
\bibitem{dob-wag}  J. Dobrowolski, F.O. Wagner, `On $\omega$-categorical groups and rings of finite burden', Israel J. Math. 236 (2020), no. 2, 801–839.
\bibitem{DMRS} C. d'Elb\'ee, I. M\"uller, N. Ramsey, D. Siniora, 'Model theoretic properties of nilpotent groups and Lie algebras', arXiv:2310.17595v2.
\bibitem{ev-wagner}D. Evans, E. Wagner, `Supersimple $\omega$-categorical groups and theories', J. Symb. Logic 65, 2000,  no. 2, 767--776.
\bibitem{felgner} U. Felgner, `On $\aleph _{0}$-categorical extra-special $p$-groups',  Logique et Anal. (N.S.) 18 (1975), no. 71-72, 407–428. 
\bibitem{henson} W. Henson, `Countable homogeneous relational structures and $\aleph _{0}$-categorical theories',  J. Symbolic Logic 37 (1972), 494–500.
\bibitem{higman0} G. Higman, `Enumerating $p$-groups. I: inequalities', Proc. London Math. Soc. (3) 10 (1960), 24--30. 
\bibitem{higman} G. Higman, `Amalgams of $p$-groups', J. Alg. 1 (1964), 301--305. 
\bibitem{klm} W.M. Kantor, M.W. Liebeck, H.D. Macpherson, `$\aleph_0$-categorical structures smoothly approximated by finite substructures', Proc. London Math. Soc. (3) 59 (1989), 439--463.
\bibitem{kruckman} A. Kruckman, `Disjoint $n$-amalgamation and pseudofinite countably categorical theories', Notre Dame J. Formal Logic, 60 (1) (2019), 139--160. 
\bibitem{krup} K. Krupinski, `On relationships between algebraic properties of groups and rings in some model-theoretic contexts', J. Symb. Logic 76, no. 4 (2011),  1403--1417.
\bibitem{krupwag} K. Krupinski, F. Wagner, `Small profinite groups and rings, J. Alg. 306 (2006), 494--506. 
\bibitem{mac1} H.D. Macpherson, `Absolutely ubiquitous structures and $\aleph_0$-categorical groups', Quart. J. Math. 39 (1988), 483--500.
\bibitem{mac-tent} H.D. Macpherson, K. Tent, `Stable pseudofinite groups', J. Alg. 312 (2007), 550--561.
\bibitem{mr} A.J. Macintyre, J.G. Rosenstein, `$\aleph_0$-categoricity for rings without nilpotent elements and for boolean structures', J. Alg. 43 (1976), 129--154. 
\bibitem{maier} B.J. Maier, `On nilpotent groups of exponent $p$', J. Alg. 127 (1989), 279--289.
\bibitem{mekler} A.H. Mekler, `Stability of nilpotent groups of class~2 and prime exponent', J. Symb. Logic 46 (1981), no. 4, 781--788.
\bibitem{milliet} C. Milliet, `Definable envelopes in groups having a simple theory', J. Algebra 492 (2017), 298–323.
\bibitem{ros} J.G. Rosenstein, `$\aleph_0$-categoricity of groups', J. Alg. 25 (1973), 435--467. 
\bibitem{sar-wood} D. Saracino, C. Wood, `Periodic existentially closed nilpotent groups', J. Alg. 58 (1979),  189--207.
\bibitem{sar-wood2} D. Saracino, C. Wood, `QE nil-$2$ groups of exponent $4$',  J. Algebra 76 (1982), no. 2, 337–352. 
\bibitem{saracino} D. Saracino, `Amalgamation bases for nil-2 groups', Alg. Univ. 16 (1983) no.1, 47-62. 
\bibitem{spencer} S. Shelah, J. Spencer, `Zero-one laws for sparse random graphs', J. Amer. Math. Soc. 1 (1988) no. 1, 97--115.
\bibitem{wilson} J. S. Wilson, `The algebraic structure of $\aleph_0$-categorical groups', in {\em Groups St. Andrews 1981 (Eds. C.M. Campbell, E.F. Robertson)}, London Math. Soc. Lecture Notes 71, 1982, 345--358.

\end{thebibliography}
\end{document}